\def\bdi{\begin{diagram}}
\def\edi{\end{diagram}}
\newtheorem{thm}{Theorem}[section]
\newtheorem{cor}[thm]{Corollary}
\newtheorem{lem}[thm]{Lemma}
\newtheorem{prop}[thm]{Proposition}
\theoremstyle{definition}
\newtheorem{defi}[thm]{Definition}
\newtheorem{defis}[thm]{Definitions}
\newtheorem{conj}[thm]{Conjecture}
\newtheorem{conv}[thm]{Convention}
\newtheorem{nota}[thm]{Notation}
\newtheorem{rem}[thm]{Remark}
\newtheorem{rems}[thm]{Remarks}
\newtheorem{exa}[thm]{Example}
\newtheorem{exas}[thm]{Examples}
\newtheorem{desc}[thm]{Description}
\newcommand{\rien}[1]{}
\newcommand{\Sing}{ \operatorname{{\rm Sing}}}
\newcommand{\Aut}{ \operatorname{{\rm Aut}}}
\newcommand{\Oc}{\mathcal{O}}
\newcommand{\Lcal}{\mathcal{L}}
\newcommand{\proj}{\ensuremath{\mathbb{P}}}
\newcommand{\cF}{{\ensuremath{\mathcal{F}}}}
\newcommand{\cE}{{\ensuremath{\mathcal{E}}}}
\newcommand{\cO}{{\ensuremath{\mathcal{O}}}}
\def\NN{{\mathbb N}}
\def\ZZ{{\mathbb Z}}
\def\PP{{\mathbb P}}
\renewcommand{\epsilon}{\varepsilon}
\renewcommand{\phi}{\varphi}
\newcommand{\bnum}{\begin{enumerate}}
\newcommand{\enum}{\end{enumerate}}
\renewcommand{\emptyset}{\varnothing}
\newcommand{\brem}{\begin{rem}}
\newcommand{\brems}{\begin{rems}}
\newcommand{\erem}{\end{rem}}
\newcommand{\erems}{\end{rems}}
\newcommand{\bexa}{\begin{exa}}
\newcommand{\bexas}{\begin{exas}}
\newcommand{\eexa}{\end{exa}}
\newcommand{\eexas}{\end{exas}}
\newcommand{\bdefi}{\begin{defi}}
\newcommand{\edefi}{\end{defi}}
\newcommand{\bdefis}{\begin{defis}}
\newcommand{\edefis}{\end{defis}}
\newcommand{\bcor}{\begin{cor}}
\newcommand{\ecor}{\end{cor}}
\newcommand{\blem}{\begin{lem}}
\newcommand{\elem}{\end{lem}}
\newcommand{\bconv}{\begin{conv}}
\newcommand{\econv}{\end{conv}}
\newcommand{\bconj}{\begin{conj}}
\newcommand{\econj}{\end{conj}}
\newcommand{\bprop}{\begin{prop}}
\newcommand{\eprop}{\end{prop}}
\newcommand{\bthm}{\begin{thm}}
\newcommand{\ethm}{\end{thm}}
\newcommand{\bnota}{\begin{nota}}
\newcommand{\enota}{\end{nota}}
\newcommand{\bsit}{\begin{sit}}
\newcommand{\esit}{\end{sit}}
\newcommand{\be}{\begin{eqnarray}}
\newcommand{\ee}{\end{eqnarray}}
\newcommand{\bproof}{\begin{proof}}
\newcommand{\eproof}{\end{proof}}
\def\ba{\begin{array}}
\def\ea{\end{array}}
\title[
Genera
of curves
on a very general surface in $\PP^ 3$]{Genera of curves on a very
general surface in $\PP^ 3$}
\author{C.\ Ciliberto, F. \ Flamini, M.\ Zaidenberg}
\address{Dipartimento di Matematica, Universit\`a degli
Studi di Roma ``Tor Vergata'', Via della Ricerca Scientifica,
00133 Roma, Italy} \email{cilibert@mat.uniroma2.it,
flamini@mat.uniroma2.it}
\address{Universit\'e Grenoble I, Institut Fourier, UMR 5582
CNRS-UJF, BP 74, 38402 Saint Martin d'H\`eres c\'edex, France}
\email{mikhail.zaidenberg@ujf-grenoble.fr}
\thanks{{\bf Acknowledgements:} The first and second authors have been supported by the
Italian MIUR Project protocol 2010S47ARA\_005 and by GNSAGA of
INdAM. The third author was supported by the French-Italian cooperation project GRIFGA. The authors thank all Institutions which helped them in this collaboration, including their own Departments.
}
\thanks{ \mbox{\hspace{11pt}}{\it 2010 Mathematics Subject Classification}:
14N25, 14J70, 32J25, 32Q45.\\ \mbox{\hspace{11pt}}{\it Key words}:
projective hypersurfaces, geometric genus, algebraic
hyperbolicity}
\date{}
\begin{document}

\begin{abstract} In this paper we consider the question of determining
the geometric genera of irreducible curves lying on a very general
surface $S$ of degree $d\geqslant 5$ in  $\PP^ 3$ (the cases
$d\leqslant 4$ are well known). For all $d\geqslant 4$ we
introduce the set ${\rm Gaps}(d)$ of all non--negative integers
which are not realized as geometric genera of irreducible curves
on a very general surface of degree $d$ in $\PP^ 3$. We prove
that  ${\rm Gaps}(d)$ is finite and, in particular, that ${\rm
Gaps}(5)= \{0,1,2\}$. The set ${\rm Gaps}(d)$ is the union of
finitely many disjoint and separated integer intervals. The first
of them, according to a theorem of Xu, is ${\rm
Gaps}_0(d):=\left[0, \; \frac{d(d-3)}{2} - 3\right]$. We show that
the next one is ${\rm Gaps}_1(d):=\left[\frac{d^2-3d+4}{2}, \; d^2
- 2d - 9\right]$ for all $d\geqslant 6$.  \end{abstract}

\maketitle

\tableofcontents

\section*{Introduction} In this paper we consider the following question:
what are the geometric genera of irreducible curves lying on a
\emph{sufficiently general} surface $S$ of degree $d$ in  $\PP^
3$?

The answer is trivial for  $d\leqslant 3$: in this case $S$ is
rational and
carries curves of any genera
(see Proposition \ref {prop:gaps3} for a more precise result).

For $d\geqslant 4$, the Noether--Lefschetz theorem says that if $S$ is a \emph{very general} surface of degree $d$ in $\PP^ 3$, then all curves on $S$ are complete intersections with 
another surface in $\PP^ 3$ (see \S\ref {sec:not} below).  So, in investigating our  question, we will suppose $S$ very general in the Noether--Lefschetz sense.

It is  well known that on a very general quartic surface in $\PP^ 3$  there are curves of all genera (see Corollary \ref {cor:g4} below). Thus our question starts becoming interesting only for $d\geqslant 5$.

In \S \ref {sec:not} we introduce, for all $d\geqslant 4$, the set
${\rm Gaps}(d)$, i.e., the set of all non--negative integers which
are not realized as geometric genera of  irreducible curves on a
very general surface of degree $d$ in $\PP^ 3$.

By a theorem of Xu (see \cite[Thm.\,1]{Xu1}), for any $d\geqslant
5$ the integer interval ${\rm Gaps}_0(d):=\left[0, \;
\frac{d(d-3)}{2} - 3\right]$ is contained in ${\rm Gaps}(d)$.
By contrast,  the length 4  interval $J_0(d):=\left[
\frac{d(d-3)}{2} - 2,  \;\frac{d(d-3)}{2} +1 \right]$
 has empty intersection with ${\rm Gaps}(d)$:
indeed, it consists of genera of plane sections of $S$, which can
have at most 3 nodes if $S$ is general (cf.\ Proposition \ref
{acz2}).
 Similarly, the length 10 interval $J_1(d):=[d^2 - 2d - 8, d^2 - 2d+1]$
has empty intersection with ${\rm Gaps}(d)$: it consists of genera
of quadric sections of $S$, which can have at most 9 nodes if $S$
is general (cf.\ again Proposition \ref {acz2}).

In \S \ref {sec:nogaps} we prove  that  ${\rm Gaps}(d)$ is finite
and, in particular, that ${\rm Gaps}(5)= {\rm Gaps}_0(5)$ (see
Theorem \ref {prop:nogaps} and Corollary \ref {cor:5}). We do not
exhibit the minimum  $G_d$ such that
 ${\rm Gaps}(d)\subseteq  [0,G_d]$; so finding $G_d$ remains an
open problem. However,  we provide in  Remark \ref {rem:est} an
asymptotic bound for $G_d$.

 The proof of the finiteness of ${\rm Gaps}(d)$ relies
on a result by Chiantini and the first author (see  \cite[Thm.
3.1]{CC})
that extends
 the above
discussion on   the intervals $J_0(d)$ and $J_1(d)$. Let $g_{d,n}$
be the arithmetic genus of complete intersections of a surface $S$
of degree $d$ with a surface of degree $n$, and let $\ell_{d,n}$
be the dimension of the linear system of these complete
intersections on $S$. Then \cite[Thm. 3.1]{CC}  asserts that for
all non--negative integers, the interval $J_n(d):=[g_{d,n}-
\ell_{d,n},g_{d,n}]$ is covered by genera of complete
intersections of $S$ with surfaces of degree $n$ with $\delta\in
[0,\ell_{d,n}]$ nodes, lying in reduced components of the Severi
variety of nodal curves on $S$. The finiteness of ${\rm Gaps}(d)$
follows from the fact that the intervals $J_n(d)$ overlap as soon
as $n\geqslant d$. In fact this argument proves more, since the
curves we find are nodal and lie in reduced components of the
Severi variety.

The set ${\rm Gaps}(d)$ is the union of finitely many disjoint and
separated integer intervals, as in \eqref {eq:totalgaps}.
Determining all of them is a quite tricky and widely open problem.
The proof of the finiteness of ${\rm Gaps}(d)$ might suggest that
these intervals could contain the integer intervals
$I_{n}(d):=[g_{d,n}+1, g_{d,n+1}-\ell_{d,n+1}-1]$, whenever
$g_{d,n}\leqslant g_{d,n+1}-\ell_{d,n+1}-2$, which, as we know
already,
happens only for finitely many $n\geqslant 1$. This is not  true
in general as shown in  \cite [Examples 1.1 and 1.2]{CC}. However,
we prove that this is the case for $d\geqslant 6$ and $n=1$ (see
Theorem \ref {thm:1gaps}). Namely,
 we show that $I_1(d)=\left[\frac{d^2-3d+4}{2}, \; d^2 - 2d - 9\right]$
is the gap interval ${\rm Gaps}_1(d)$ next to ${\rm Gaps}_0(d)$.
The proof is not difficult for $d\geqslant 9$ (see Proposition
\ref {prop:gaps}). It is based on a result by
Clemens-Xu-Chiantini-Lopez
(see Theorem \ref {thm:cl}),
 which bounds from below the geometric genus of a complete intersection
of a general surface $S$ of degree  $d$ in $\PP^ 3$ with a surface
of degree $n$. When $d\geqslant 9$, this bound
forces a curve with geometric genus $g\leqslant d^2 - 2d - 9$ on
$S$ to lie on a surface of degree $n\leqslant 2$, and the
aforementioned  Proposition \ref {acz2} forces $g$ to lie in
$J_0(d)\cup J_1(d)$, which is exactly complementary to $I_1(d)$.
This argument falls short for $6\leqslant d\leqslant 8$, which
requires a more delicate analysis performed in \S \ref
{sec:conjecture}. A reduction step
 in \S \ref {ssec:red}  reduces these cases to verify non--existence of certain
curves on irreducible, but eventually singular surfaces
 of degree $n=3,4$. This requires  in turn
a quite subtle case by case analysis which relies on the
classification of irreducible cubics and quartics in  $\PP^ 3$
(see ~\cite{Bru, BruWal,IsNa,Um,Um2,Ur}).

 The present paper leaves several open problems.
The main one, which we mentioned already, would be to have a
better comprehension of ${\rm Gaps}(d)$, of its subdivision \eqref
{eq:totalgaps} into
disjoint intervals (how many are there?),  and of the constant
$G_d$ introduced above.

 Of course, one might ask similar (and more difficult) questions
for general hypersurfaces in higher dimensional projective space,
 and for complete intersections. Concerning this, it is
worthwhile mentioning the result of Chiantini--Lopez--Ran in \cite
{CLR},
 which implies  that the minimal geometric genus of a subvariety on a
very general hypersurface of degree $d$ in $\PP^N$ goes to
infinity with $d$, for any given $N\ge 2$.

\subsection* {Notation and conventions} We work over the field of complex numbers.
For notation and terminology we refer to \cite{Hart}.
In particular, for $X$ a reduced, irreducible, projective variety,
we denote by $\omega_X$ its dualizing sheaf: when $X$ is
Gorenstein, $\omega_X$ is invertible. For divisors on a smooth
variety $X$, we use the symbols  $\sim$ and $\equiv$ to denote
linear and numerical equivalence, respectively.  We will sometimes
abuse notation and use the same symbol to denote a divisor $D$ on
$X$ and its class in ${\rm Pic}(X)$. Thus $K_X$ will denote a
canonical divisor or the canonical sheaf $\omega_X$.

Recall that an isolated singular point of a surface $S$ is called
a \emph{Du Val} or \emph {irrelevant} or \emph {simple}
singularity if its fundamental cycle in a minimal
desingularization has dual graph of type $A_n, D_n, E_6, E_7,
E_8$.

\section{Preliminaries}\label{sec:not} Let $d$ be a positive integer.
For $\mathcal L_d: =\vert \mathcal O_{\PP^ 3}(d)\vert$ we set
\be\label{400} N_d: =\dim\,(\mathcal L_d)={{d+3}\choose 3}-1\,.
\ee We will denote by $U_d$ the dense open subset of $\mathcal
L_d$ whose points correspond to smooth surfaces.

Recall that, by Noether--Lefschetz theorem (see, e.g.,  \cite
{GH}), the Picard group of a {very general} surface  $X\in U_d$
with $d\geqslant 4$ is generated by $\mathcal O_X(1)$. \emph{Very
general} means that the property holds off the union $\mathcal
N_d$ of countably many proper Zariski closed subsets of $U_d$. The
set  $\mathcal N_d$ is called the \emph{Noether--Lefschetz locus}
in degree $d$.

Given   $X\in U_d$ and a non--negative integer $n$, we let
$\mathcal L_{X,n}:=\vert \mathcal O_X(n)\vert$, and we denote by
$\ell_{d,n}$ its dimension. One has \be\label{4000} \ell_{d,n}=
\Bigg \{  \begin{array}{ccc}
&N_n= \frac {n (n^ 2+6n+11)}6 \,\,\, &\text {if}\,\,\, n<d \\
&N_n-N_{n-d}-1= \frac {d\big ( 3n^ 2-3n (d-4)+(d^
2-6d+11)\big)}{6} -1
 \,\,\, &\text {if}\,\,\, n\geqslant d, \\
\end{array}
\ee and \be\label{4000b} g_{d,n}=\frac {dn(d+n-4)}2+1\, \ee is the
arithmetic genus of the curves in $\mathcal L_{X,n}$.

For an irreducible $X\in \mathcal L_d$ and a non--negative integer
$g$, $\mathcal V_{n,g}(X)$
will denote the locally closed subset of  $\mathcal L_{X,n}$
formed by irreducible curves  of geometric genus $g$. If the
general member of a component of $\mathcal V_{n,g} (X)$ is
\emph{nodal} with $\delta$ nodes, then  $g_{d,n}=g+\delta$.

\begin{defi}\label{def:gaps} Consider the Zariski closure
$V_{n,d,g}$ in $U_d$ of the locus of all $X\in U_d-\mathcal N_d$
such that $\mathcal V_{n,g}(X)\neq \emptyset$.  Let
$V_{d,g}=\cup_{n\in \mathbb N} V_{n,d,g}$. A non-negative integer
$g$ is said to be a $d$--\emph{gap} if $V_{d,g}\neq U_d$. Roughly
speaking, $g$ is a $d$--gap if and only if,  for a very general
surface $X \in \Lcal_d$, one has $\mathcal V_{n,g}(X) = \emptyset$
 for all $n \geqslant 1$. We will denote by ${\rm Gaps}(d)$ the
set of $d$-gaps. A non--negative integer $g\not \in {\rm Gaps}(d)$
will be called a $d$--\emph{non--gap}.
\end{defi}

In studying ${\rm Gaps}(d)$ we may and will assume $d\geqslant 4$,
since:

\begin{prop}\label{prop:gaps3}
${\rm Gaps}(d)=\emptyset$ for $1\leqslant d\leqslant 3$.
\end{prop}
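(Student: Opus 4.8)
The plan is to show that a smooth surface $S\subset\PP^3$ of degree $d\le 3$ carries irreducible curves of every geometric genus $g\ge 0$, so that $V_{d,g}=U_d$ for all such $g$ and ${\rm Gaps}(d)=\emptyset$. The case $d=1$ is trivial: $S\cong\PP^2$, which contains smooth plane curves of every degree, hence irreducible curves of every genus $\binom{m-1}{2}$ as well as, say, nodal plane curves realizing every intermediate genus. The case $d=2$ is equally easy: a smooth quadric $S\cong\PP^1\times\PP^1$ contains smooth curves of bidegree $(a,b)$, which are irreducible for $a,b\ge 1$ and have genus $(a-1)(b-1)$; fixing $b=1$ gives genus $0$ for every $a$, while for each target $g$ one takes $(a,b)$ with $(a-1)(b-1)=g$ (e.g.\ $a=g+1$, $b=2$), and the remaining genera are filled in by taking nodal degenerations inside the very ample linear system $|aH_1+bH_2|$.

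The substantive case is $d=3$, a smooth cubic surface $S$. Here I would use that $S$ is the blow-up of $\PP^2$ at six general points, so $\Pic(S)$ is generated by the pullback $\ell$ of a line and the six exceptional curves $e_1,\dots,e_6$, with $-K_S=3\ell-\sum e_i$ very ample (the hyperplane class). For a very ample divisor class $D$ on a smooth rational surface, the linear system $|D|$ contains irreducible smooth curves of genus $p_a(D)=\tfrac12 D\cdot(D+K_S)+1$; moreover, by a standard Bertini-plus-degeneration argument (the general curve in $|D|$ being smooth and irreducible, one may specialize to irreducible curves acquiring $\delta$ nodes for every $\delta$ from $0$ up to $p_a(D)$, keeping irreducibility as long as $\dim|D|$ is large enough — and $\dim|D|\to\infty$ as $D$ grows). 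Thus it suffices to exhibit, for every $g\ge 0$, a very ample class $D_g$ on $S$ with $p_a(D_g)\ge g$ and $\dim|D_g|$ large; taking $D_g=m(-K_S)$ for $m\gg 0$ works, since $p_a(m(-K_S))=\tfrac12(3m^2-3m)+1\to\infty$ and $h^0(S,-mK_S)\to\infty$. Then pick the smallest $m$ with $p_a(m(-K_S))\ge g$ and degenerate to an irreducible nodal curve with exactly $p_a(m(-K_S))-g$ nodes.

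The main obstacle — really the only point requiring care — is the irreducibility of the nodal degenerations: one must ensure that while imposing nodes the curve does not break into several components. This is handled by choosing $m$ large enough that $\dim|m(-K_S)|$ exceeds the number $\delta$ of nodes to be imposed by a definite margin, so that a general element of the (nonempty, by the dimension count $\dim|m(-K_S)|-\delta\ge 0$) Severi variety of $\delta$-nodal curves is irreducible; alternatively, one invokes the result of \cite[Thm.\ 3.1]{CC} quoted in the Introduction, which already guarantees that the interval $[p_a(D)-\ell(D),p_a(D)]$ is covered by geometric genera of irreducible nodal curves in $|D|$ on any surface, applied here with $S$ the cubic and $D=m(-K_S)$; the union of these intervals over all $m$ covers all of $\NN$ because consecutive arithmetic genera $p_a(m(-K_S))$ grow quadratically while the $\ell(m(-K_S))$ grow quadratically as well, so the intervals eventually overlap. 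In all three cases $d=1,2,3$ this shows $\mathcal V_{n,g}(S)\ne\emptyset$ for suitable $n$ and every $g\ge 0$, hence $V_{d,g}=U_d$, i.e.\ ${\rm Gaps}(d)=\emptyset$.
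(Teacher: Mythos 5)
Your overall strategy runs in the opposite direction from the paper's, and this is where the gap lies. You start from the smooth members of a suitable linear system and then want to degenerate to irreducible curves acquiring $\delta$ nodes for every $\delta$ from $0$ up to $p_a(D)$. This is exactly the nonemptiness-and-irreducibility problem for Severi varieties, which is a genuine theorem and not a ``standard Bertini-plus-degeneration argument'': a dimension count of the form $\dim|D|-\delta\geqslant 0$ does not by itself produce an irreducible $\delta$-nodal curve (imposing nodes is not a linear condition, and even where the Severi variety is nonempty its general member could a priori be reducible). Your fallback, invoking \cite[Thm.~3.1]{CC}, is quoted too broadly: that result (Theorem \ref{thm:cc} here) concerns the linear systems $|\mathcal O_X(n)|$ on a general surface of degree $d$ in $\PP^3$ and is stated for $d\geqslant 4$; it is not a statement about an arbitrary very ample $|D|$ ``on any surface'', and the paper deliberately does not use it for $d\leqslant 3$. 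The paper's proof goes bottom-up instead: it first produces an irreducible \emph{rational} curve in $|\mathcal O_X(n)|$ (citing \cite{testa}), and then uses the standard deformation argument of \cite{nobile} to smooth any prescribed subset of its nodes, realizing every genus in $[0,g_{d,n}]$; letting $n$ grow finishes the proof. Smoothing nodes of a given irreducible nodal curve is the unobstructed, genuinely routine direction; creating the nodes while preserving irreducibility is not, and your proposal leaves that step unproved.

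A second, smaller issue: Definition \ref{def:gaps} requires the witnessing curves to lie in $|\mathcal O_X(n)|$ for some $n$. Your $d=2$ witnesses of genus $g$ are curves of bidegree $(g+1,2)$ on $\PP^1\times\PP^1$, which are not hypersurface sections (those have bidegree $(n,n)$), so as written they do not show $\mathcal V_{n,g}(X)\neq\emptyset$. This is reparable by working inside $|nH_1+nH_2|=|\mathcal O_X(n)|$, but then you are back to needing the Severi-variety input above. Your $d=3$ choice $D=m(-K_S)=\mathcal O_S(m)$ is fine on this score.
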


\begin{proof} This is well known in the plane case
(see  \cite[Thm. (1.49)]{HM}). For quadrics and cubics the
proof is the same. Indeed, $\mathcal V_{n,0}(X)$ is nonempty for
$X\in U_d$, with $d\leqslant 3$ (see, e.g., \cite {testa}). Then a
well known deformation argument shows that $\mathcal V_{n,g}(X)$
is nonempty  for all $g\leqslant g_{d,n}$ (see, e.g., \cite
{nobile}).
\end{proof}

We  will abuse notation and, for integers $k,l$ with $k\leqslant
l$, we will write $[k,l]$ for the \emph{integer interval}
$[k,l]\cap\ZZ$, and we call it simply \emph{interval}.

By \cite[Thm.\,1]{Xu1}, every  $g \leqslant g_{d,1} - \ell_{d,1} -
1 = g_{d,1}-4 = \frac{d(d-3)}{2} - 3$ is a $d$--gap, i.e.
\begin{equation}\label{eq:initialgaps}
{\rm Gaps}_0 (d) : = \left[0, \; \frac{d(d-3)}{2} - 3\right]
\subseteq {\rm Gaps} (d).
\end{equation}
$ {\rm Gaps}_0 (d)$ will be called the {\em initial gap interval}
(see Remark \ref {rem:nongap} below).

\section{The range with no gaps}\label{sec:nogaps} In this section we will
show the finiteness of ${\rm Gaps}(d)$ for any $d \geqslant 4$.
To do this, we first recall:

\begin{thm}\label{thm:cc} {\rm (\cite[Thm. 3.1]{CC})}  Let $X\in \Lcal_d$
be a general surface of degree $d \geqslant 4$ in $\proj^ 3$. For
all integers $n \geqslant 1$ and $g\in J_n(d):=[ g_{d,n}-
\ell_{d,n}, g_{d,n}]$ there is  a reduced, irreducible component
$\mathcal V$ of  $\mathcal V_{n,g}(X)$ whose general element is
nodal with $\delta=g_{d,n}-g$ nodes and $\dim(\mathcal
V)=\ell_{d,n}-\delta$. In  particular $J_n(d)\cap {\rm Gaps}
(d)=\emptyset$.
\end{thm}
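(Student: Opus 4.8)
The plan is to build, for each $g$ in the interval $J_n(d)=[g_{d,n}-\ell_{d,n},g_{d,n}]$, an irreducible nodal curve of geometric genus $g$ on the \emph{fixed} general surface $X$, sitting inside a component of the Severi variety $\cV_{n,g}(X)$ of the expected dimension $\ell_{d,n}-\delta$ with $\delta=g_{d,n}-g$. The natural starting point is the top of the interval: a \emph{smooth} complete intersection $C_0=X\cap Y_0$ with $Y_0$ a general surface of degree $n$ has geometric genus $g_{d,n}$, and the linear system $\cL_{X,n}=\vert\cO_X(n)\vert$ has dimension exactly $\ell_{d,n}$, so the point $[C_0]$ is a smooth point of a component of $\cV_{n,g_{d,n}}(X)$ of dimension $\ell_{d,n}$. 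One then degenerates $C_0$, one node at a time, descending through the values $g=g_{d,n}-1, g_{d,n}-2,\dots, g_{d,n}-\ell_{d,n}$.

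First I would set up the one-node degeneration carefully. Given an irreducible nodal curve $C\subset X$ with $\delta<\ell_{d,n}$ nodes lying in a reduced component $\cV$ of $\cV_{n,g}(X)$ of dimension $\ell_{d,n}-\delta$, I want to show $C$ can be specialized, inside $\cL_{X,n}$, to an irreducible curve $C'$ with $\delta+1$ nodes whose equisingular deformations form a component of dimension $\ell_{d,n}-\delta-1$. The key is a Bertini-type / dimension count: the locus in $\cL_{X,n}\cong\PP^{\ell_{d,n}}$ of curves with at least $\delta+1$ assigned (or unassigned) nodes is nonempty of the expected codimension. Since $\cO_X(n)$ is very ample for $n\geqslant 1$ (indeed $\cO_X(1)$ already embeds $X$ in $\PP^3$, and $\cO_X(n)$ pulls back $\cO_{\PP^3}(n)$), imposing one more node is a codimension-$\leqslant 1$ condition and is satisfiable as long as $\delta+1\leqslant\ell_{d,n}$; the reducedness/smoothness of the new component follows because the nodes impose independent conditions on $\vert\cO_X(n)\vert$ whenever their number does not exceed $\ell_{d,n}$ — here the very ampleness and the fact that $X$ is general (so has no unexpected special curves through few general points) do the work. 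This "independence of nodes" input is essentially what \cite[Thm.\,3.1]{CC} packages, so I would lean on it rather than reprove it.

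The main obstacle is precisely controlling the \emph{irreducibility} of the degenerate curves and the \emph{reducedness} of the Severi component as the number of nodes grows toward $\ell_{d,n}$ — a priori the curve could break into components or the equisingular family could become superabundant. The way around this is to not move the nodes freely but to keep track of them: start from $C_0$ smooth, choose $\ell_{d,n}$ general points $p_1,\dots,p_{\ell_{d,n}}$ on $X$, and observe that the sublinear system of $\cL_{X,n}$ of divisors singular at $p_1,\dots,p_\delta$ has dimension exactly $\ell_{d,n}-\delta\geqslant 0$ and, by generality of the $p_i$ and of $X$, its general member is irreducible with exactly those $\delta$ nodes and no others (a general member of a base-point-free-away-from-the-$p_i$ system on a surface with Picard number one cannot be reducible, since a splitting would force a curve class dividing $n\cdot\cO_X(1)$, controllable for $X$ general). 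This gives, for each $\delta\in[0,\ell_{d,n}]$, an irreducible $\delta$-nodal curve whose equisingular locus near it has dimension $\ell_{d,n}-\delta$, i.e. a reduced component of $\cV_{n,g}(X)$ with $g=g_{d,n}-\delta$, which is exactly the assertion. Finally, since such a curve exists on the general $X\in\cL_d$ and persists under deformation avoiding $\cN_d$, the value $g$ is not a $d$-gap, giving $J_n(d)\cap\mathrm{Gaps}(d)=\emptyset$; the union over $n\geqslant 1$ with the overlapping of the $J_n(d)$ for $n\geqslant d$ then yields the finiteness statement used in \S\ref{sec:nogaps}, though that last step is outside the present statement.
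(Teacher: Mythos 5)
The paper does not reprove this statement: it is quoted from \cite[Thm. 3.1]{CC}, with only the added remark that the argument there, stated for $n\geqslant d$, works equally well (indeed more easily) for $n<d$. Your proposal instead tries to reconstruct the proof, and its central construction has a genuine gap. You realize a $\delta$-nodal curve by taking the sublinear system of $\cL_{X,n}$ of divisors singular at $\delta$ \emph{assigned} general points $p_1,\dots,p_\delta$, and you assert this system has dimension $\ell_{d,n}-\delta$. But a singularity at an assigned point of a surface is $3$ linear conditions (vanishing of the local equation and of both partials), not $1$; when these conditions are independent the system has dimension $\ell_{d,n}-3\delta$, and it is empty as soon as $3\delta>\ell_{d,n}$. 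The codimension-$\delta$ count is correct only for the Severi variety with \emph{unassigned} nodes (one recovers $(\ell_{d,n}-3\delta)+2\delta=\ell_{d,n}-\delta$ after letting the $\delta$ nodes move on $X$), and producing a point of that variety for \emph{every} $\delta\leqslant\ell_{d,n}$, together with the regularity (reducedness and expected dimension) of the component through it, is exactly the nontrivial content of \cite[Thm. 3.1]{CC}. Concretely, for $n=1$ one has $\ell_{d,1}=3$ and the theorem requires plane sections with up to $3$ nodes, i.e.\ bitangent and tritangent planes; your assigned-node system is already empty for $\delta=2$, so the construction cannot reach most of $J_n(d)$.

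Two further problems. First, the irreducibility argument (``a splitting would force a curve class dividing $n\cdot\cO_X(1)$'') does not work: on a Noether--Lefschetz general $X$ every curve lies in some $|\cO_X(m)|$, so $|\cO_X(n)|$ contains reducible members (a plane section plus a member of $|\cO_X(n-1)|$) for every $n\geqslant 2$; one must instead bound the dimension of the locus of reducible or non-nodal members inside the relevant family. Second, you explicitly ``lean on'' \cite[Thm. 3.1]{CC} for the independence of the conditions imposed by the nodes, which makes the argument circular as a proof of that very theorem. Since the paper itself treats this statement as a citation, the two defensible options are to cite it as the paper does, or to give a genuine proof of the existence of regular Severi components for all $\delta\in[0,\ell_{d,n}]$ --- which the sketch as written does not supply.
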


 Theorem \ref{thm:cc} is stated in \cite {CC}  under the
assumption $n\geqslant d$, but the same (actually easier) argument
works  for $n<d$.

The following consequence is well known:

\begin{cor}\label{cor:g4} One has ${\rm Gaps}(4)=\emptyset$.
\end{cor}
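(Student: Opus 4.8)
The plan is to deduce Corollary \ref{cor:g4} directly from Theorem \ref{thm:cc} together with the initial gap interval \eqref{eq:initialgaps}. For $d=4$ the initial gap interval ${\rm Gaps}_0(4) = \left[0,\ \frac{4\cdot 1}{2}-3\right] = [0,-1]$ is empty, so there is nothing forced to be a gap from Xu's bound; what remains is to show that \emph{every} non-negative integer $g$ is a $4$-non-gap, i.e.\ is realized as the geometric genus of an irreducible curve on a very general quartic. The natural strategy is to cover $\mathbb{N}$ by the intervals $J_n(4) = [g_{4,n} - \ell_{4,n},\ g_{4,n}]$ for $n \geqslant 1$, since by Theorem \ref{thm:cc} each such interval is disjoint from ${\rm Gaps}(4)$.

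First I would compute, from \eqref{4000} and \eqref{4000b} specialized to $d=4$, the numbers $g_{4,n}$ and $\ell_{4,n}$ for small $n$ and observe that for $n \geqslant 1$ the left endpoint of $J_{n+1}(4)$ does not exceed $g_{4,n}+1$ (indeed $g_{4,n+1} - \ell_{4,n+1} \leqslant g_{4,n}+1$), so the intervals $J_1(4), J_2(4), J_3(4), \dots$ overlap or are contiguous and their union is $[\,g_{4,1} - \ell_{4,1},\ \infty)\cap\mathbb{Z}$. Since $g_{4,1} = \frac{4\cdot 1\cdot(4+1-4)}{2}+1 = 3$ and $\ell_{4,1} = N_1 = 3$, the left endpoint is $g_{4,1}-\ell_{4,1} = 0$, so $\bigcup_{n\geqslant 1} J_n(4) = \mathbb{N}$. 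Combined with Theorem \ref{thm:cc}, which gives $J_n(4)\cap {\rm Gaps}(4) = \emptyset$ for all $n\geqslant 1$, this yields ${\rm Gaps}(4) = \emptyset$.

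The only point requiring a small verification is the overlap inequality $g_{4,n+1} - \ell_{4,n+1} \leqslant g_{4,n} + 1$ for all $n\geqslant 1$; this is a routine monotonicity check using \eqref{4000}--\eqref{4000b} and presents no real obstacle, the general phenomenon being precisely the one invoked in the Introduction that the intervals $J_n(d)$ overlap once $n\geqslant d$ (here $d=4$, so overlap holds for all $n\geqslant 1$, in fact one should double-check the borderline cases $n=1,2,3$ by hand since these are the $n<d$ cases). I expect no genuine difficulty: the corollary is essentially a bookkeeping consequence of the two cited facts, which is why the excerpt calls it ``well known.''
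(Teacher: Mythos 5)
Your proposal is correct and follows essentially the same route as the paper: both deduce the claim from Theorem \ref{thm:cc} by showing that the intervals $J_n(4)$ cover all non-negative integers. The paper's version is just slightly slicker: it observes that $g_{4,n}=\ell_{4,n}$ for every $n\geqslant 1$, so that each $J_n(4)=[0,g_{4,n}]$ already starts at $0$ and no overlap check between consecutive intervals is needed.
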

\begin{proof} This  follows from Theorem \ref {thm:cc}, since $g_{4,n}= \ell_{4,n}$
for all positive integers $n$.
\end{proof}

\begin{rem}\label{rem:nongap}
Since $\ell_{d,1} =3$, from Theorem \ref{thm:cc}   one has
$\mathcal V_{1,g} (X) \neq \emptyset$ for  $g\in J_1(d)=[g_{d,1}-3, \;
g_{d,1}]$, i.e., any such $g$ is a $d$--non--gap.
\end{rem}

As a consequence of Theorem \ref {thm:cc}, we obtain:

\begin{thm}\label{prop:nogaps} For integers $n\geqslant d\geqslant 4$, and for
any $g\geqslant g_{d,n-1} -\ell_{d,n-1}$, there is an irreducible
nodal curve of geometric genus $g$ on the general surface of
degree $d$ in $\proj ^ 3$. In particular, one has  ${\rm Gaps}(d)
\subset \left[0,\; g_{d,d-1}-\ell_{d,d-1}-1 \right]= \left[0,\;
\frac{d(d-1)(5d-19)}{6} -1\right]$.
\end{thm}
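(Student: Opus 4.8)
The plan is to deduce Theorem~\ref{prop:nogaps} from Theorem~\ref{thm:cc} by a covering argument: the key point is that consecutive intervals $J_n(d)$ and $J_{n+1}(d)$ begin to overlap once $n$ is large enough. Concretely, I would first record that $g_{d,n+1}-\ell_{d,n+1}\leqslant g_{d,n}+1$ precisely when $n\geqslant d-1$, since $g_{d,n+1}-g_{d,n}=\frac{d(2n+d-3)}{2}$ grows quadratically while $\ell_{d,n+1}-1$ is, for $n+1\geqslant d$, also quadratic in $n$ but with leading coefficient $\frac{d}{2}$ against $d$ — so the difference $g_{d,n+1}-\ell_{d,n+1}-g_{d,n}$ eventually becomes negative, and a direct comparison using the closed-form expressions \eqref{4000} and \eqref{4000b} pins the threshold at $n=d-1$. (One should be mildly careful at $n+1=d$, where the formula for $\ell_{d,n+1}$ switches branches, but this is exactly the case one needs, and the inequality is checked by substitution.)

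Granting that, I would argue as follows. Fix $n\geqslant d$ and let $g\geqslant g_{d,n-1}-\ell_{d,n-1}$. If $g\leqslant g_{d,n-1}$, then $g\in J_{n-1}(d)$ and Theorem~\ref{thm:cc} (valid for all $n-1\geqslant 1$, including $n-1<d$, as remarked after that theorem) already produces an irreducible nodal curve of geometric genus $g$ on the general surface of degree $d$, lying in a reduced component of the appropriate Severi variety. If instead $g>g_{d,n-1}$, then since $n\geqslant d$ we have $n-1\geqslant d-1$, hence by the threshold computation $g_{d,n}-\ell_{d,n}\leqslant g_{d,n-1}+1\leqslant g$, so $g$ lies in $J_n(d)=[g_{d,n}-\ell_{d,n},\,g_{d,n}]$ provided also $g\leqslant g_{d,n}$; and if $g>g_{d,n}$ we simply pass to $n+1$, and so on. Since the intervals $J_m(d)$ for $m\geqslant n-1$ form an unbroken chain covering $[g_{d,n-1}-\ell_{d,n-1},\infty)$, every such $g$ lands in some $J_m(d)$ with $m\geqslant n-1\geqslant 1$, and Theorem~\ref{thm:cc} applies. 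This yields the first assertion.

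For the ``in particular'' statement, take $n=d$ in the first part: every $g\geqslant g_{d,d-1}-\ell_{d,d-1}$ is a $d$--non--gap, so ${\rm Gaps}(d)\subseteq[0,\,g_{d,d-1}-\ell_{d,d-1}-1]$. It then remains to evaluate $g_{d,d-1}-\ell_{d,d-1}$ in closed form. Using \eqref{4000b}, $g_{d,d-1}=\frac{d(d-1)(2d-5)}{2}+1$; using the first branch of \eqref{4000} (since $n=d-1<d$), $\ell_{d,d-1}=N_{d-1}=\frac{(d-1)(d^2+4d+6)}{6}$. Subtracting and simplifying should give $g_{d,d-1}-\ell_{d,d-1}=\frac{d(d-1)(5d-19)}{6}+\text{(constant)}$; matching the claimed $\frac{d(d-1)(5d-19)}{6}$ after subtracting the extra $1$ is then just polynomial arithmetic, which I would carry out and check against small values of $d$ (e.g.\ $d=4,5$) as a sanity test.

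The main obstacle is not conceptual but bookkeeping: getting the overlap threshold exactly right, i.e.\ verifying that $n=d-1$ (and not $d$ or $d-2$) is the first index at which $g_{d,n}\geqslant g_{d,n+1}-\ell_{d,n+1}-1$, which requires handling the branch change in \eqref{4000} at $n=d$ and doing the comparison of two explicit cubic-in-$n$ polynomials correctly. Everything else — the chaining of the intervals and the final closed-form evaluation — is routine once that inequality is nailed down.
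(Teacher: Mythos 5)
Your proposal follows essentially the same route as the paper: both deduce the result from Theorem~\ref{thm:cc} by showing that consecutive intervals $J_{n-1}(d)$ and $J_n(d)$ overlap for $n\geqslant d$ (the paper's inequality \eqref{eq:1}, which it reduces to $3n(n-d+2)+(d^2-9d+26)\geqslant 0$) and then chaining. Two inessential quibbles: the overlap is between a quadratic ($\ell_{d,n}$) and a linear ($g_{d,n}-g_{d,n-1}$) function of $n$, not two quadratics, and your sharpness claim that the threshold is \emph{precisely} $n=d-1$ is false (cf.\ Remark~\ref{rem:est}: the overlap also occurs for some $n<d$), but neither affects the argument since only the direction ``holds for all $n\geqslant d$'' is used.
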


\begin{proof} By Theorem \ref {thm:cc},
we need to show that for $n\geqslant d$, the union $J_{n-1}(d)\cup
J_n(d)$ is an integer interval, i.e.,
\begin{equation} \label{eq:1}
\ell_{d,n}\geqslant g_{d,n}-g_{d,n-1}-1= \frac {d(2n+d-5)}2-1.
\end{equation} By  \eqref{4000},  \eqref {eq:1} reads
\begin{equation*}\label{eq:2}
3n(n-d+2)+(d^ 2-9d+26)\geqslant 0,
\end{equation*}  which holds for $n\geqslant d$.   \end{proof}

\begin{rem}\label{rem:est} It is possible to give a better estimate
for the minimal integer $G_d$ such that
${\rm Gaps}(d) \subset \left[0,\; G_d \right]$. Indeed, if  $n<d$,
then \eqref {eq:1} reads
\begin{equation}\label{eq:3}
n^ 3+6n^ 2+n(11-6d)-3(d^ 2-5d-2)\geqslant 0.
\end{equation}
For this to hold, it suffices that
\[
n^ 3+6n^ 2-6nd-3d^ 2\geqslant 0 \,\,\, {\rm i.e.}\,\,\, d\leqslant
\sqrt  {\frac  {n^ 3+9n^ 2} 3}-n.
\]
 The latter inequality is fulfilled, e.g., if $d>n\geqslant
\sqrt[3]{12d^ 2}$. For any such $n$, we have $G_d\le
g_{d,n-1}-\ell_{d,n-1}-1$. 
\end{rem}

\begin{cor}\label{cor:5} For any integer $g\geqslant 3$ there is an
irreducible nodal curve of geometric genus $g$
on a general surface of degree $5$ in $\proj ^ 3$, i.e.,  ${\rm
Gaps}(5) = {\rm Gaps}_0(5) = \{0,1 , 2\}$.
\end{cor}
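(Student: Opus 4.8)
The plan is to derive this as an immediate corollary of Theorem~\ref{prop:nogaps} in the case $d=5$, combined with the Xu lower bound~\eqref{eq:initialgaps}. First I would specialize Theorem~\ref{prop:nogaps} to $d=5$: it gives an irreducible nodal curve of every geometric genus $g\geqslant g_{5,d-1}-\ell_{5,d-1}=g_{5,4}-\ell_{5,4}$ on the general quintic surface. Using~\eqref{4000b}, $g_{5,4}=\frac{5\cdot 4\cdot(5+4-4)}{2}+1=51$, and using~\eqref{4000} with $n=4<d=5$, $\ell_{5,4}=N_4=\frac{4(16+24+11)}{6}=34$. Hence $g_{5,4}-\ell_{5,4}=17$, so every $g\geqslant 17$ is a $5$--non--gap. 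This already shows ${\rm Gaps}(5)\subseteq[0,16]$, which by itself does not suffice.

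To close the remaining range $[3,16]$, I would instead invoke Theorem~\ref{thm:cc} directly for small $n$, which is sharper than the crude interval-overlap bound used in Theorem~\ref{prop:nogaps}. For $d=5$ and $n=1,2,3$, formula~\eqref{4000b} gives $g_{5,1}=6$, $g_{5,2}=16$, $g_{5,3}=31$, while~\eqref{4000} (with $n<d$) gives $\ell_{5,1}=3$, $\ell_{5,2}=9$, $\ell_{5,3}=19$. Thus Theorem~\ref{thm:cc} yields non--gaps in $J_1(5)=[3,6]$, $J_2(5)=[7,16]$, and $J_3(5)=[12,31]$. These intervals already cover $[3,31]$, and together with the $g\geqslant 17$ range from Theorem~\ref{prop:nogaps} (or simply with $J_3(5)$ and the overlap argument for $n\geqslant d=5$) one obtains that every $g\geqslant 3$ is a $5$--non--gap. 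The curves produced are moreover nodal and lie in reduced components of the Severi variety, as asserted.

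For the reverse inclusion, I would quote~\eqref{eq:initialgaps}: by Xu's theorem, ${\rm Gaps}_0(5)=\left[0,\frac{5\cdot 2}{2}-3\right]=[0,2]=\{0,1,2\}\subseteq{\rm Gaps}(5)$. Combining the two inclusions gives ${\rm Gaps}(5)={\rm Gaps}_0(5)=\{0,1,2\}$, which is the claim. I expect no real obstacle here: the entire argument is a numerical specialization of results already established (Theorem~\ref{thm:cc}, Theorem~\ref{prop:nogaps}, and~\eqref{eq:initialgaps}), and the only care needed is to check that the intervals $J_1(5)$, $J_2(5)$, $J_3(5)$ (and the subsequent $J_n(5)$ for $n\geqslant 4$, handled by Theorem~\ref{prop:nogaps}) chain together without leaving a gap above $g=2$ — which the inequalities above confirm.
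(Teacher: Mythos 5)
Your proof is correct and is essentially the paper's own argument: both establish the upper inclusion ${\rm Gaps}(5)\subseteq\{0,1,2\}$ by showing that the intervals $J_n(5)$ from Theorem~\ref{thm:cc} chain together from $J_1(5)=[3,6]$ onward (the paper checks this via the inequalities \eqref{eq:1} and \eqref{eq:3} for $2\leqslant n\leqslant 4$ and $n\geqslant 5$, while you compute $J_1(5),J_2(5),J_3(5)$ explicitly and append Theorem~\ref{prop:nogaps}), and both get the reverse inclusion from Xu's theorem \eqref{eq:initialgaps}. Your numerical values for $g_{5,n}$ and $\ell_{5,n}$ all check out.
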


\begin{proof} We know that \eqref {eq:1} holds for any $n \geqslant 5 = d$.
When $2 \leqslant n \leqslant 4$,
\eqref {eq:3} also holds. Thus $G_5\le g_{5,1}-\ell_{5,1}-1=2$ (see
Remarks \ref {rem:nongap} and \ref{rem:est}), and so ${\rm
Gaps}(5) \subseteq \{0,1 , 2\}$. On the other hand,  ${\rm
Gaps}_0(5) = \{0,1 , 2\}$ by Xu's theorem cited above.
\end{proof}

\section{Gaps}\label{sec:gaps}

By Proposition \ref{prop:gaps3} and
Corollaries  \ref{cor:g4} and \ref{cor:5},  we can focus on $d
\geqslant 6$. By  Theorem \ref{prop:nogaps},  ${\rm Gaps}(d)$ is
finite. Hence there exists an integer $n_d \geqslant 0 $ such that
\begin{equation}\label{eq:totalgaps}
{\rm Gaps}(d) = \bigcup_{j=0}^{n_d} {\rm Gaps}_{j}(d), \; \;
\mbox{with} \; \;{\rm Gaps}_{j}(d) : = [a_j, \; b_j],
\end{equation} where
\[
a_0=0<b_0= \frac{d(d-3)}{2} - 3\,\,\, {\rm and}\,\,\,
b_{j-1}+1<a_j\leqslant b_j \,\,\, \mbox{for all}  \,\,\, j>0.
\]
The disjoint and \emph{separated} intervals ${\rm Gaps}_{j}(d)$
are called the
{\em gap intervals}. The initial gap interval ${\rm Gaps}_0(d)$ is
as in (\ref{eq:initialgaps}). Our aim is to determine the next
gap interval ${\rm Gaps}_1(d)$, see Theorem \ref{thm:1gaps} below.

\begin{rem}\label {rem:i} By
 Theorem \ref{prop:nogaps} we have
\begin{equation}\label{eq:gaps-complement}
{\rm Gaps}(d)\subset\NN\cup\{0\}\setminus\bigcup_{n\ge 1} J_n(d).
\end{equation}
Looking at the proof of this theorem, one might guess that there
are $d$--gaps $g$ with
\begin{equation}\label{eq:gaps}
g_{d,n-1}+1\leqslant g\leqslant g_{d,n}-\ell_{d,n}-1
\end{equation} for any $n \geqslant 2$, any time \eqref {eq:1}
does not hold, namely that
\begin{equation}\label{eq:intervalgaps}
I_{n-1} (d):= [g_{d,n-1}+1,  \, g_{d,n}-\ell_{d,n}-1] \subseteq
{\rm Gaps}(d)
\end{equation} if $n \geqslant 2$, $d \geqslant 6$ and  \eqref{eq:1} does not hold.
However, this is not true in general, as \cite [Examples 1.1 and
1.2]{CC} show. For instance, $I_2(20)\not\subseteq {\rm Gaps}(20)$, by \cite [Example 1.1]{CC}. Nonetheless,
$I_1(d)\subseteq {\rm Gaps}(d)$, see Theorem
\ref{thm:1gaps} below.
\end{rem}

Determining all $d$--gaps for $d\geqslant 6$ is a  tricky problem.
 In this section  we show  that there are $d$--gaps other than
the ones in ${\rm Gaps}_0(d)$, i.e. $n_d>0$ for $d\geqslant 6$.
Recall first the following results.

\begin{thm} \label{thm:cl} {\rm (\cite[Thm. (1.2)]{cl}, \cite {cle},
\cite [Thm.  2.1]{Xu1})} Let $X$ be a general surface of degree $d
\geqslant 5$ in $\proj^ 3$, and let $C\in \Lcal_{X,n}$ be an
irreducible curve of geometric genus $g$. Then $$g>\frac {nd(d-5)}
2+1.$$
\end{thm}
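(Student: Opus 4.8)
\textbf{Proof strategy for Theorem \ref{thm:cl}.} The plan is to reduce to the degeneration technique of Clemens and Xu, as refined by Chiantini--Lopez: one fixes an irreducible curve $C\subset X$ of geometric genus $g$ lying on the general surface $X$ of degree $d$, moves $X$ in the linear system $\Lcal_d$ to a sufficiently general pencil, and specializes to a reducible surface $X_0=Y\cup H$ with $H$ a plane and $Y$ of degree $d-1$, keeping track of the limit of $C$. The geometric genus being upper semicontinuous in flat families (after normalization, the arithmetic genus of the limit is at least $g$, and each component contributes), a bound on the genus of the general $C$ follows from a bound on the components of the limit cycle. First I would set up the total space $\cX\to \Delta$ of the degeneration and the limit $C_0$ of the family of curves: $C_0$ decomposes into a part on $Y$ and a part on $H$, together with the curve $Y\cap H$ possibly appearing with multiplicity, and one analyzes the dualizing sheaf $\omega_{\cX}$ restricted to the components.

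The key computation is the adjunction-type inequality on the central fiber. On $X$ smooth of degree $d$, a curve $C\in\Lcal_{X,n}$ has $C^2=n^2d$ and $C\cdot K_X=n d(d-4)$, so $2p_a(C)-2=nd(n+d-4)$; the point is to show that the \emph{geometric} genus cannot drop below the stated value. One shows inductively (descending on $d$, or equivalently tracking how the class $\Of_X(n)$ behaves under the specialization $X\rightsquigarrow Y\cup H$) that the contribution to the genus coming from $Y$ dominates, and that the ``defect'' lost by passing from $p_a$ to $g$ is controlled by the intersection with the plane sections. The upshot of iterating the Clemens--Xu argument $d$ times (peeling off one hyperplane at a time until one reaches surfaces of low degree where the bound is classical, cf.\ the cases $d\leqslant 4$ and Xu's \cite[Thm. 2.1]{Xu1}) is precisely the inequality $g>\tfrac{nd(d-5)}{2}+1$, the quantity $\tfrac{nd(d-5)}{2}+1=g_{d,n}-\tfrac{nd}{2}\cdot 1+\cdots$ being exactly what survives after discarding the $n$ hyperplane-section contributions of each of the $d$ peelings.

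The main obstacle I expect is controlling the limit curve $C_0$ carefully enough: a priori $C_0$ could acquire many components supported on $Y\cap H$ with high multiplicity, or the family of normalizations could behave badly, so one must choose the degeneration generically (generic pencil through $X$, general plane $H$) and invoke the fact that $Y$ is itself a general surface of degree $d-1$ to apply the inductive hypothesis to each component of $C_0\cap Y$. A second delicate point is that the statement concerns a \emph{general} $X$, hence a countable intersection of open conditions; here one argues that for each fixed $n$ and each candidate value $g\leqslant \tfrac{nd(d-5)}{2}+1$, the locus $V_{n,d,g}$ of Definition \ref{def:gaps} is a proper closed subset of $U_d$, so its complement is very general. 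Since the bound is what we want to \emph{assume} later (it is quoted as an input for Theorem \ref{thm:1gaps} and Proposition \ref{prop:gaps}), in practice one simply cites \cite[Thm. (1.2)]{cl}, \cite{cle}, and \cite[Thm. 2.1]{Xu1}; but the sketch above is the route by which those references establish it.
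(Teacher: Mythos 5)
The paper offers no proof of this statement: it is quoted verbatim from Chiantini--Lopez \cite[Thm.\ (1.2)]{cl}, Clemens \cite{cle} and Xu \cite[Thm.\ 2.1]{Xu1}, so your closing remark that ``in practice one simply cites'' those references is exactly what the authors do, and nothing more is required of you here. However, the strategy you sketch is not the strategy of any of the cited references, and as a standalone argument it has a genuine gap. Clemens, Xu and Chiantini--Lopez all work with the universal family of pairs $(C,X)$ dominating $\Lcal_d$ and extract positivity on the normalization $\nu\colon \widetilde C\to C\subset X$: deforming $X$ produces enough sections of (a twist of) the normal sheaf of $\nu$ to force $\omega_{\widetilde C}\otimes\nu^*\Oc_C(5-d)$ to be effective and nonzero, whence $2g-2> nd(d-5)$. (Chiantini--Lopez phrase this via first-order focal loci; the ``defect'' $p_a(C)-g<\tfrac{nd(n+1)}{2}=\tfrac{C^2+C\cdot H}{2}$ is precisely the degree of the focal divisor, i.e.\ the bound comes from $2g-2\geqslant C\cdot K_X-C\cdot H$, not from ``$n$ hyperplane-section contributions of each of $d$ peelings''.) No degeneration of $X$ to a reducible surface $Y\cup H$ occurs in these proofs.

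Taken on its own terms, your degeneration sketch does not close. The semicontinuity runs the wrong way for your purposes: in a flat family of curves the delta-invariant can only jump up at the special fibre, so the geometric genus of the limit is \emph{at most} $g$, and a lower bound for $g$ therefore requires a lower bound for the genus of the limit cycle. That limit cycle can be non-reduced and can contain components supported on the double curve $Y\cap H$ with uncontrolled multiplicities; bounding its contribution from below is the entire difficulty of such arguments (this is the hard content of the Caporaso--Harris/Ran-type degeneration machinery, and of \cite{CC} where degenerations of $X$ are actually used, for a different purpose). You name this obstacle but do not supply the idea that overcomes it, and the base of your proposed induction (``surfaces of low degree where the bound is classical'') is also problematic, since for $d\leqslant 4$ there is no genus bound at all (Corollary \ref{cor:g4}): the inequality only becomes nontrivial at $d=6$, exactly because the mechanism producing it is the positivity of $\nu^*\Oc_C(5-d)^{\vee}\otimes\omega_{\widetilde C}$ rather than an induction on $d$.
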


\begin{prop} \label{acz2} {\rm (\cite[Cor.  2.9]{cz})}
Let $X$ be a general surface of degree $d\geqslant 3$ in $\PP^ 3$.
If $g\geqslant 0$ and $n\in\{1,2\}$ are such that $\mathcal
V_{n,g}$ is non--empty, then
\begin{equation}\label{eq:ge}
 g_{d,1}-3 \leqslant g\leqslant g_{d,1}
\quad \text {if} \quad n=1 \quad \text {and} \quad
g_{d,2}-9\leqslant g\leqslant g_{d,2} \quad \text {if} \quad
n=2\,.
\end{equation}
\end{prop}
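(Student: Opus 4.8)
The plan is to prove Proposition \ref{acz2} by directly bounding the number of nodes that a plane or quadric section of a general surface $X$ of degree $d$ in $\PP^3$ can acquire. The statement is equivalent to saying that an irreducible nodal plane curve $C$ (resp.\ quadric section) of geometric genus $g$ on $X$ has at most $3$ nodes (resp.\ at most $9$ nodes), since $g = g_{d,n} - \delta$ with $\delta$ the number of nodes, and one always has $g \leqslant g_{d,n}$. First I would reduce to the nodal case: a general element of a component of $\mathcal V_{n,g}(X)$ is nodal (this is classical for plane sections; for quadric sections it follows from the fact that any reduced plane/quadric section of a general $X$ has only nodes as singularities, or one invokes the relevant genus-vs-$\delta$ bookkeeping), so it suffices to bound $\delta$.

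The heart of the argument is a Noether--Lefschetz type restriction. A hyperplane section $C = X \cap H$ with $\delta$ nodes, say at points $p_1,\dots,p_\delta$, forces $H$ to be tangent to $X$ at each $p_i$; equivalently, the $p_i$ are singular points of $C$ lying on $X$. The key point is that as $X$ varies in the family of smooth degree-$d$ surfaces, one can move $X$ so that the conditions ``$X\cap H$ is singular at $\delta$ prescribed general points'' cut out a locus in $|\mathcal O_{\PP^3}(d)|$ of the expected codimension $3\delta$ (each node imposes $3$ conditions: passing through $p_i$ and having vanishing differential there, modulo the $1$-dimensional freedom, giving $3$ independent conditions on the pencil of planes / on $X$ — more precisely one counts in the incidence variety of pairs $(X,H)$). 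One then compares $3\delta$ with the dimension of the space of planes, $\dim(\PP^{3\vee}) = 3$, plus the dimension of the moduli of choices; the outcome is that for a general $X$ the number of nodes cannot exceed $3$. For quadric sections the analogous count uses $\dim |\mathcal O_{\PP^3}(2)| = 9$, yielding at most $9$ nodes. In both cases one must check that the general such $X$ is still smooth and still Noether--Lefschetz general, i.e.\ the bad loci are proper closed (countably many) subsets, which is where the ``very general'' hypothesis is used — although here the statement only needs ``general'', since finitely many conditions suffice to bound $\delta$.

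Concretely, I would set up the incidence variety $\mathcal I_{n,\delta} = \{(X,[V],p_1,\dots,p_\delta)\}$ where $[V]\in |\mathcal O_{\PP^3}(n)|$ and the $p_i\in X\cap V$ are distinct nodes of $X\cap V$, project to $|\mathcal O_{\PP^3}(d)|$, and show the image has dimension $< N_d$ as soon as $\delta > \ell_{d,n}$ — note $\ell_{d,1}=3$ and $\ell_{d,2}=9$, which is exactly the numerology in \eqref{eq:ge}. The fibre of the other projection (to the choice of $V$ and the $p_i$) consists of degree-$d$ surfaces singular-section-constrained at $\delta$ points, and a standard dimension count (the $\delta$ nodal conditions on $X$ given $V$ are independent for $\delta \leqslant \ell_{d,n} + (\text{small correction})$, and become over-determined beyond) gives the bound. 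The main obstacle is precisely this independence-of-conditions step: one needs that imposing $\delta$ nodes of $X\cap V$ at general points cuts the expected codimension, which requires either an explicit transversality/genericity argument (e.g.\ exhibiting one $(X,V)$ realising $\ell_{d,n}$ independent nodes, say via Theorem \ref{thm:cc} with $n\in\{1,2\}$, which already produces nodal curves with any $\delta \in [0,\ell_{d,n}]$ in reduced components of the Severi variety) and then a semicontinuity argument to rule out $\delta = \ell_{d,n}+1$. Since Theorem \ref{thm:cc} furnishes the positive side ($\delta$ up to $\ell_{d,n}$ is attained), the only real work is the upper bound, and I would carry it out by the incidence-variety dimension count sketched above, using smoothness of the general $X$ to conclude that the locus of $X$ admitting a $V$-section with $\ell_{d,n}+1$ nodes is empty in $U_d$.
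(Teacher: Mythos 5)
The paper gives no proof of Proposition \ref{acz2}: it is quoted verbatim from \cite[Cor.~2.9]{cz}, and the proof there is exactly the scheme the present paper spells out in \S\ref{ssec:strategy} for $n=3,4$, namely: fix a smooth $V\in\mathcal L_n$, bound the dimension of the family of irreducible genus-$g$ curves in $|\mathcal O_V(d)|$ by Proposition \ref{prop:acz} (giving $v_0=g-1+3d$ for the plane and $g-1+4d$ for the smooth quadric), add $N_{d-n}+1$ for the degree-$d$ surfaces cutting a fixed such curve and $N_n$ for moving $V$, and compare with $N_d$; the inequality $\dim(I')<N_d$ is equivalent to $g<g_{d,n}-\ell_{d,n}$. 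Your overall numerology ($3\delta$ conditions versus $N_n+2\delta$ parameters, failing exactly when $\delta>\ell_{d,n}=N_n$) is the right shadow of this count, but your route has two genuine gaps.

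First, the reduction to nodal curves is both unjustified and insufficient. The hypothesis is that $\mathcal V_{n,g}(X)\neq\emptyset$, i.e.\ that \emph{some} irreducible section of geometric genus $g$ exists; plane sections of a general surface do acquire cusps, tacnodes and worse (e.g.\ tangent planes at parabolic or flecnodal points), and a single singular point with $\delta$-invariant $\geqslant 4$ would give $g\leqslant g_{d,1}-4$ while contributing only one marked point, hence only $3$ conditions, to your incidence count. The assertion that the general member of every component of $\mathcal V_{n,g}(X)$ is nodal is not ``classical'' here --- it is essentially the Arbarello--Cornalba/Zariski/Harris statement about equigeneric families, i.e.\ precisely the content of Proposition \ref{prop:acz}, which bounds $\dim(\mathcal V)$ uniformly in the singularity type and thereby makes the whole issue disappear. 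Second, the independence of the $3\delta$ tangency conditions is only clear for general configurations of points on $V$, whereas the nodes of an actual section are constrained; components of $\mathcal I_{n,\delta}$ sitting over special configurations (where $\delta$ fat points fail to impose independent conditions on $|\mathcal O_V(d)|$) have smaller base but possibly much larger fibres, and these are exactly the loci your count does not control. Your proposed remedy --- exhibit one transversal $(X,V)$ via Theorem \ref{thm:cc} and invoke semicontinuity --- goes the wrong way: semicontinuity of the rank of the conditions at one good configuration says nothing about the dimension of the fibre over degenerate ones, and Theorem \ref{thm:cc} is an existence statement for $\delta\leqslant\ell_{d,n}$ that carries no information about non-existence for $\delta=\ell_{d,n}+1$. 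Both gaps are closed simultaneously by replacing the node-by-node transversality argument with the intrinsic bound of Proposition \ref{prop:acz} applied on the fixed plane or quadric, as in \cite[Cor.~2.9]{cz}.
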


As a consequence, recalling \eqref{eq:intervalgaps}, we have:

\begin{prop}\label{prop:gaps} If $d\geqslant 9$ then
\begin{equation}\label{eq:g}
{\rm Gaps}_1 (d)=I_1(d) = \left[\frac{d^2-3d+4}{2}, \; d^2 - 2d -
9\right].
\end{equation}
Furthermore, for $d=7,8$ we have
\begin{equation}\label{eq:g-prim}
[g_{7,1}+1, \; g_{7,1} + 7] \; \subseteq \; {\rm Gaps}_1 (7)
\,\,\, {\rm and} \,\,\, [g_{8,1}+1, \; g_{8,1} + 16]  \; \subseteq
\; {\rm Gaps}_1 (8).
\end{equation}
\end{prop}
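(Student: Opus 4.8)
\textbf{Proof plan for Proposition \ref{prop:gaps}.}

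The plan is to exploit the lower bound in Theorem \ref{thm:cl} together with the sharp constraints of Proposition \ref{acz2}. For the main case $d\geqslant 9$ I would argue as follows. The inclusion $I_1(d)\subseteq {\rm Gaps}(d)$ that is known from Remark \ref{rem:i} must actually be an \emph{equality} of the second gap interval: by Theorem \ref{thm:cc} applied with $n=1$ and $n=2$, the intervals $J_1(d)=[g_{d,1}-3,g_{d,1}]$ and $J_2(d)=[g_{d,2}-9,g_{d,2}]$ consist entirely of non-gaps, and by Theorem \ref{prop:nogaps} everything from $g_{d,d-1}-\ell_{d,d-1}$ on is a non-gap; so $I_1(d)$, which is by construction the integer interval strictly between $J_1(d)$ and the union $\bigcup_{n\geqslant 2}J_n(d)$, is a candidate gap interval provided it is nonempty and nothing inside it is realized. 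The first task is thus a numerical one: check that for $d\geqslant 6$ one has $g_{d,1}+1\leqslant g_{d,2}-\ell_{d,2}-1$, i.e.\ that \eqref{eq:1} fails for $n=2$, so that $I_1(d)$ is a genuine nonempty interval, and compute its endpoints from \eqref{4000} and \eqref{4000b} to get $I_1(d)=[\frac{d^2-3d+4}{2},\,d^2-2d-9]$.

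The heart of the argument for $d\geqslant 9$ is to show no irreducible curve of genus $g\in I_1(d)$ lies on a very general $X$. Suppose $C\in\Lcal_{X,n}$ has geometric genus $g\leqslant d^2-2d-9$. By Theorem \ref{thm:cl}, $g>\tfrac{nd(d-5)}{2}+1$, hence $\tfrac{nd(d-5)}{2}+1<d^2-2d-9$, which for $d\geqslant 9$ forces $n\leqslant 2$: indeed for $n=3$ the left side is $\tfrac{3d(d-5)}{2}+1$ and one checks $\tfrac{3d(d-5)}{2}+1\geqslant d^2-2d-9$ precisely when $d\geqslant 9$ (this is the inequality $d^2-11d+20\geqslant 0$ up to the additive constants, which I would verify cleanly). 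So $C$ lies in $\Lcal_{X,1}$ or $\Lcal_{X,2}$, and Proposition \ref{acz2} then pins $g$ into $[g_{d,1}-3,g_{d,1}]\cup[g_{d,2}-9,g_{d,2}]=J_1(d)\cup J_2(d)$, which is disjoint from $I_1(d)$ by the endpoint computation above. This contradiction shows $I_1(d)\subseteq {\rm Gaps}(d)$; combined with the non-gap statements bracketing it, $I_1(d)$ is exactly the next gap interval ${\rm Gaps}_1(d)$, giving \eqref{eq:g}.

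For $d=7,8$ the same scheme only yields a weaker conclusion because the bound of Theorem \ref{thm:cl} no longer excludes $n=3$ (and for $d=6$ it gets even worse). Here the plan is to restrict attention to the subinterval of $I_1(d)$ that \emph{is} excluded by the $n\leqslant 2$ analysis alone: a genus $g$ with $g>\tfrac{3d(d-5)}{2}+1$ still forces $n\leqslant 2$, hence $g\in J_1(d)\cup J_2(d)$; so any $g$ with $g_{d,1}+1\leqslant g\leqslant \tfrac{3d(d-5)}{2}+1$ and $g\notin J_1(d)\cup J_2(d)$ is a gap. For $d=7$ this threshold $\tfrac{3\cdot7\cdot2}{2}+1=22=g_{7,1}+7$ gives $[g_{7,1}+1,g_{7,1}+7]\subseteq {\rm Gaps}_1(7)$, and for $d=8$ the threshold $\tfrac{3\cdot8\cdot3}{2}+1=37=g_{8,1}+16$ gives $[g_{8,1}+1,g_{8,1}+16]\subseteq {\rm Gaps}_1(8)$; in each case one must also check these segments avoid $J_2(d)$, i.e.\ stay below $g_{d,2}-9$. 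The main obstacle is precisely that this leaves a gap in the argument for $6\leqslant d\leqslant 8$ between the threshold $\tfrac{3d(d-5)}{2}+1$ and the true right endpoint $d^2-2d-9$ of $I_1(d)$ (and the whole interval for $d=6$); closing it requires ruling out curves on irreducible, possibly singular, cubic and quartic surfaces, which is exactly the delicate case-by-case analysis deferred to \S\ref{sec:conjecture}. So for this proposition I would be content with the partial statement \eqref{eq:g-prim} and signal that the $d=6$ case and the remainder of $d=7,8$ are treated later.
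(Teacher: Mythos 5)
Your proposal follows the paper's proof essentially verbatim: the Clemens--Xu--Chiantini--Lopez bound of Theorem \ref{thm:cl} forces $n\leqslant 2$ for $g\leqslant d^2-2d-9$ when $d\geqslant 9$ (via $d^2-11d+20\geqslant 0$), Proposition \ref{acz2} then confines $g$ to $J_1(d)\cup J_2(d)$, which is disjoint from $I_1(d)$, and the bracketing non-gaps $g_{d,1}$ and $g_{d,2}-9$ from Theorem \ref{thm:cc} give the equality in \eqref{eq:g}; your $d=7,8$ thresholds $22$ and $37$ reproduce \eqref{eq:g-prim} exactly. The only blemish is the reversed inequality in the clause ``a genus $g$ with $g>\frac{3d(d-5)}{2}+1$ still forces $n\leqslant 2$'' --- it should read $g\leqslant\frac{3d(d-5)}{2}+1$ --- but since you apply the threshold in the correct direction immediately afterwards, nothing in the argument is affected.
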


\begin{proof} Let $X$ be a very general surface of degree $d\geqslant 9$
in $\proj^ 3$.  Let $C\in \Lcal_{X,n}$ be an irreducible curve
with geometric genus $g\leqslant g_{d,2}-\ell_{d,2}-1=d(d-2)-9$. By
Theorem \ref {thm:cl}, we have
\begin{equation}\label{eq:gaps2}
d(d-2)-9\geqslant g> \frac {nd(d-5)} 2+1,\,\,\, \text {i.e.,}
\,\,\, n< \frac {2(d^ 2-2d-10)}{d(d-5)}.
\end{equation}Suppose that $n\geqslant 3$.  Then \eqref {eq:gaps2} yields
$d^ 2-11d+20<0$, which implies $d\leqslant 8$, a contradiction.
Thus $n\leqslant 2$
and by Proposition \ref {acz2} one has
$I_1(d) \subseteq   {\rm Gaps}_1 (d)$. Since $\ell_{d,2} = 9$, by
Theorem \ref{thm:cc} we have $\mathcal V_{2,g_{d,2} - 9} (X) \neq
\emptyset$, i.e., $g_{d,2} - 9$ is a $d$--non--gap. Since
$g_{d,1}$ is also a $d$--non--gap,
the equality in (\ref{eq:g}) follows.

 For $d=7,8$ the argument is similar; we leave the details to the
reader. \end{proof}


\section{The  second  gap interval}\label{sec:conjecture}

In this section we  extend Proposition \ref{prop:gaps}, proving
the following theorem.

\begin{thm}\label{thm:1gaps} For all $d\ge 6$ one has
\[ {\rm Gaps}_1(d) = I_1(d)=
\left[\frac{d^2-3d+4}{2}, \; d^2 - 2d - 9\right].\]
\end{thm}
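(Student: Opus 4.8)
Thanks to Proposition \ref{prop:gaps} the theorem is already known for $d\geqslant 9$, and Corollaries \ref{cor:g4}, \ref{cor:5} dispose of the small degrees, so the plan is to settle the remaining cases $d=6,7,8$. As in the proof of Proposition \ref{prop:gaps}, the inclusion $I_1(d)\subseteq {\rm Gaps}_1(d)$ together with the fact that $g_{d,1}$ and $g_{d,2}-9$ are $d$--non--gaps (the latter by Theorem \ref{thm:cc}, since $\ell_{d,2}=9$) would give the full equality; hence \emph{the entire content is the inclusion} $I_1(d)\subseteq {\rm Gaps}(d)$, i.e.\ that no irreducible curve $C$ of geometric genus $g\in I_1(d)=[g_{d,1}+1,\,g_{d,2}-10]$ can lie on a very general surface $X$ of degree $d\in\{6,7,8\}$. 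By Noether--Lefschetz, such a $C$ is cut out on $X$ by a surface of some degree $n$; by Theorem \ref{thm:cl} one has $g>\tfrac{nd(d-5)}{2}+1$, and combining with $g\leqslant g_{d,2}-10=d(d-2)-10$ bounds $n$ from above. For $d=6,7,8$ this bound no longer forces $n\leqslant 2$ (that was the point where the argument of Proposition \ref{prop:gaps} broke down): one finds $n\leqslant 4$. Proposition \ref{acz2} already excludes $n=1,2$, so the real work is to rule out $n=3$ and $n=4$.

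\textbf{The reduction step.} First I would set up the reduction announced in the introduction (\S\ref{ssec:red}): a curve $C\subset X$ which is a complete intersection $X\cap Y$ with $\deg Y=n\in\{3,4\}$ and which has the forbidden genus $g$ must, by a semicontinuity/specialization argument, degenerate as $X$ degenerates; more precisely, letting $X$ specialize while keeping $C=X\cap Y$ of the given genus forces $Y$ (or a limit of it) to become reducible or to acquire enough singularities that $C$ itself sits on an irreducible but possibly singular surface $Y_0$ of degree $n\leqslant 4$ meeting a general surface of degree $d$. Concretely, the claim to establish is: if $I_1(d)\not\subseteq{\rm Gaps}(d)$ then there exists an irreducible (possibly singular) surface $Y_0\subset\PP^3$ of degree $n\in\{3,4\}$ and an irreducible curve on $Y_0$ cut by a general surface of degree $d$ whose geometric genus lies in $I_1(d)$; one then has to show no such $Y_0$ exists. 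This uses the genus bound of Theorem \ref{thm:cl} again on $Y_0$ (valid because $X$ is general), together with adjunction on a desingularization of $Y_0$.

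\textbf{Case analysis on cubic and quartic surfaces.} The heart of the proof is then a classification-driven case check. For $n=3$: an irreducible cubic surface $Y_0$ in $\PP^3$ is either smooth, or has only rational double points, or is one of the finitely many non--normal cubics; using the classification (\cite{Bru,BruWal,IsNa,Um,Um2,Ur}) I would go through each type, pass to the minimal desingularization $\tilde Y_0$, compute the arithmetic genus of the proper transform of $X\cap Y_0$ via adjunction, and estimate the drop in genus coming from the singularities of $Y_0$ lying on the curve. The goal in each case is to show that the geometric genus of any such irreducible curve is either $\leqslant g_{d,1}$ or $\geqslant g_{d,2}-9$, hence outside $I_1(d)$; the mildly singular and non--normal cubics, where one can pick up unexpectedly low genus, are the dangerous ones and must be handled individually, bounding the contribution of each $A_n,D_n,E_n$ configuration (or of the non--normal locus) to the delta--invariant. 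For $n=4$ the same program is carried out with the classification of irreducible quartics; here the list of singularity types is longer, so this is where I expect the main obstacle to lie. The delicate point throughout is that for the most degenerate surfaces the naive genus bound is too weak, and one needs the sharper input of Theorem \ref{thm:cl} applied on $Y_0$ (using that $X$ is general of degree $d$, so $X\cap Y_0$ cannot be too special on $Y_0$) to close the gap. Once every cubic and quartic type has been eliminated, $n\leqslant 2$ is forced for $d=6,7,8$ as well, Proposition \ref{acz2} finishes the inclusion $I_1(d)\subseteq{\rm Gaps}(d)$, and combined with the two non--gaps $g_{d,1}$ and $g_{d,2}-9$ bracketing $I_1(d)$ we conclude ${\rm Gaps}_1(d)=I_1(d)$ for all $d\geqslant 6$.
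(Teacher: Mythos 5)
Your opening reductions are right and match the paper: the equality follows from the inclusion $I_1(d)\subseteq{\rm Gaps}(d)$ because $g_{d,1}$ and $g_{d,2}-\ell_{d,2}=g_{d,2}-9$ are non--gaps by Theorem \ref{thm:cc}; Theorem \ref{thm:cl} plus Proposition \ref{acz2} cut the problem down to $d\in\{6,7,8\}$, $n\in\{3,4\}$ and a short list of genera (this is exactly Lemma \ref{lem:gaps}, obtained by pure arithmetic — no specialization or semicontinuity argument is needed, and the vague degeneration step you describe is not how the reduction works). You also correctly identify that the classification of irreducible cubics and quartics drives a case-by-case analysis. But the engine you propose for that analysis is the wrong one, and I do not see how to make it run. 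You want to show, stratum by stratum, that \emph{any} irreducible curve cut on an irreducible cubic or quartic $Y_0$ by a degree-$d$ surface has geometric genus outside $I_1(d)$, by bounding the genus drop contributed by the singularities of $Y_0$ lying on the curve. This cannot succeed: the $\delta$-invariant of $C=X\cap Y_0$ also receives contributions from singularities of $C$ at \emph{smooth} points of $Y_0$, i.e.\ from tangencies between $X$ and $Y_0$, and these are unbounded as $X$ ranges over all smooth surfaces of degree $d$. Curves of the forbidden genera do exist on special cubics and quartics cut by suitably tangent surfaces; the theorem only asserts they do not occur on a \emph{very general} $X$, and the generality of $X$ can only enter through a dimension count, which your proposal never sets up. Your fallback, ``the sharper input of Theorem \ref{thm:cl} applied on $Y_0$,'' does not fill this hole: that theorem gives a lower bound on $g$ that is already fully exploited in the reduction step and says nothing further.

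The paper's actual mechanism is an incidence-variety estimate. For each irreducible family $\cF\subseteq\mathcal L_n$ of irreducible degree-$n$ surfaces one forms $I\subseteq U_d\times\cF$ of pairs $(X,\Sigma)$ with $X\cap\Sigma$ irreducible of genus $g$, and shows that every component $I'$ dominating $\cF$ satisfies $\dim(I')\leqslant\dim(\cF)+v_0+N_{d-n}+1<N_d$, so $I'$ cannot dominate $U_d$. The crucial input is Proposition \ref{prop:acz} (from \cite{cz}): on the minimal desingularization $S$ of $\Sigma$, every component of the family of irreducible genus-$g$ curves has dimension at most $v_0=\max\{g,\,g-1-\kappa\}$ with $\kappa=K_S\cdot\Gamma$. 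The classification of cubics and quartics is then used to \emph{compute or bound $\kappa$ and $\dim(\cF)$} in each stratum (e.g.\ $\kappa=-3d$ for cubics with Du Val singularities, $\kappa=E\cdot D_E$ for normal quartics with irrational singularities, with ad hoc geometric arguments — nef classes, pencils of low degree on $\Sigma$ — to bound $-E\cdot D_E$), not to exclude genera; one stratum of non-normal rational quartics even fails the numerical criterion and needs a separate bound on $\dim(\cF)$ via the Rohn sequence. Without Proposition \ref{prop:acz} or some substitute dimension bound, your argument does not close, so as written the proposal has a genuine gap at its core.
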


By Proposition \ref{prop:gaps}, we may assume in the sequel that
$6\leqslant d\leqslant 8$.
The proof  consists in a  case by case analysis, which we
will perform in the rest of this section.

\subsection{A reduction step} \label{ssec:red} The following lemma reduces
the analysis to finitely many cases:

\begin{lem}\label{lem:gaps} Assume $6\leqslant d\leqslant 8$.
To prove that $I_1(d)= {\rm Gaps}_1 (d)$,
 it suffices to show that for $X\in \mathcal L_d$ general,
 one has $\mathcal V_{n,g}(X)=\emptyset$ if
\begin{eqnarray}\label{eq:rescases2}
d=6, & n=3, & g\in [11,15]\nonumber \\
d=6, & n=4, & g\in [14,15] \nonumber \\
d= 7, & n=3, & g\in [23,26]  \\
d = 8, & n = 3, & g\in [38,39].\nonumber
\end{eqnarray}
 \end{lem}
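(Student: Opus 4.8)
The strategy is to reduce the statement ``${\rm Gaps}_1(d)=I_1(d)$ for $6\leqslant d\leqslant 8$'' to a finite, explicit list of non--existence statements for curves of small degree $n$ and prescribed geometric genus. The starting point is the inclusion $I_1(d)\subseteq {\rm Gaps}_1(d)$, which does \emph{not} hold a priori and must be established together with the reverse inclusion; what Lemma \ref{lem:gaps} really asks is that after the reductions below, the only genera in $I_1(d)$ that could possibly be realized would force curves in degrees $n=3$ (and, for $d=6$, also $n=4$) with the listed genera, so killing those cases proves $I_1(d)\subseteq {\rm Gaps}_1(d)$; combined with Theorem \ref{thm:cc} (which gives that $g_{d,1}$ and $b_0+1=g_{d,1}-2$ are $d$--non--gaps, hence the interval is maximal) this yields ${\rm Gaps}_1(d)=I_1(d)$.

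\textbf{Step 1: bound the degree $n$.}
Let $X$ be very general of degree $d\in\{6,7,8\}$ and let $C\in\mathcal L_{X,n}$ be irreducible of geometric genus $g$ lying in the candidate gap interval, i.e. $g_{d,1}+1\leqslant g\leqslant d^2-2d-9=g_{d,2}-\ell_{d,2}-1$. As in the proof of Proposition \ref{prop:gaps}, Theorem \ref{thm:cl} gives $g>\frac{nd(d-5)}{2}+1$, hence
\[
\frac{nd(d-5)}{2}+1 < d^2-2d-9,\qquad\text{i.e.}\qquad n<\frac{2(d^2-2d-10)}{d(d-5)}.
\]
For $d=6$ this gives $n<\frac{2\cdot 14}{6}=\frac{28}{6}<5$, so $n\leqslant 4$; for $d=7$ it gives $n<\frac{2\cdot 25}{14}=\frac{25}{7}<4$, so $n\leqslant 3$; for $d=8$ it gives $n<\frac{2\cdot 38}{24}=\frac{19}{6}<4$, so $n\leqslant 3$. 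Thus only $n\in\{1,2\}$ together with the finitely many extra cases $(d,n)\in\{(6,3),(6,4),(7,3),(8,3)\}$ survive.

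\textbf{Step 2: dispose of $n=1,2$.}
For $n\in\{1,2\}$, Proposition \ref{acz2} says that a curve of geometric genus $g$ in $\mathcal L_{X,n}$ must satisfy $g\in[g_{d,1}-3,g_{d,1}]=J_1(d)$ (if $n=1$) or $g\in[g_{d,2}-9,g_{d,2}]=J_2(d)$ (if $n=2$). Neither interval meets $I_1(d)=[g_{d,1}+1,g_{d,2}-10]$: the first lies entirely below $g_{d,1}+1$, and the second lies entirely above $g_{d,2}-10$. So $n=1,2$ contribute no genera in $I_1(d)$.

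\textbf{Step 3: pin down the residual genera.}
For each remaining pair $(d,n)$ the genus must satisfy simultaneously $g\in I_1(d)$, $g>\frac{nd(d-5)}{2}+1$ (Theorem \ref{thm:cl}), and $g\leqslant g_{d,n}$ (the curve has geometric genus at most the arithmetic genus $g_{d,n}=\frac{dn(d+n-4)}{2}+1$ of its linear system). Intersecting these three constraints is a routine computation with the formulas \eqref{4000b} for $g_{d,n}$ and the endpoints of $I_1(d)$: for $(d,n)=(6,3)$ one gets $g\in[11,15]$; for $(6,4)$ one gets $g\in[14,15]$; for $(7,3)$ one gets $g\in[23,26]$; for $(8,3)$ one gets $g\in[38,39]$. (In each case the lower bound comes from Theorem \ref{thm:cl} — e.g. $\frac{3\cdot6\cdot1}{2}+1=10$ so $g\geqslant 11$ for $d=6,n=3$ — and the upper bound from $g\leqslant g_{d,n}$ or the right endpoint of $I_1(d)$, whichever is smaller.) These are exactly the lines of \eqref{eq:rescases2}. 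Hence if one shows $\mathcal V_{n,g}(X)=\emptyset$ in all these cases, then no $g\in I_1(d)$ is a $d$--non--gap, so $I_1(d)\subseteq{\rm Gaps}(d)$, and since $g_{d,1}$ and $g_{d,1}-2=b_0+1$ are non--gaps by Theorem \ref{thm:cc} (Remark \ref{rem:nongap}), $I_1(d)$ is precisely the gap interval ${\rm Gaps}_1(d)$ following ${\rm Gaps}_0(d)$.

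\textbf{Main obstacle.}
The only genuinely delicate point is the arithmetic of Step 3 — making sure that the three constraints really cut out exactly the intervals claimed in \eqref{eq:rescases2}, with no spurious genera escaping (e.g. checking that for $d=7,n=3$ the bound $g>\frac{3\cdot7\cdot2}{2}+1=22$ together with $g\leqslant\min(g_{7,3},\,d^2-2d-9)=\min(?,26)=26$ gives precisely $[23,26]$, and similarly that $g_{6,3}=19\geqslant 15$, $g_{6,4}=25\geqslant 15$, etc., so the upper bound is indeed governed by the right end of $I_1(d)$). Everything else is an application of the already--cited Theorems \ref{thm:cc}, \ref{thm:cl} and Proposition \ref{acz2}. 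The truly hard work — proving the emptiness of $\mathcal V_{n,g}(X)$ in the four surviving cases, which requires the classification of irreducible cubic and quartic surfaces in $\PP^3$ — is deferred to the subsequent subsections and is not part of this lemma.
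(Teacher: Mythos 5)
Your proposal is correct and follows essentially the same route as the paper: use Theorem \ref{thm:cl} to bound $n$ (the paper rules out $n\geqslant 5$ uniformly and then treats $n=4$ separately, while you compute the bound per value of $d$, which amounts to the same thing), dispose of $n=1,2$ via Proposition \ref{acz2}, and cut down the residual genera with Theorem \ref{thm:cl} again (the paper invokes \eqref{eq:g-prim} here, but that is the same computation). One trivial slip: $b_0+1=g_{d,1}-3$, not $g_{d,1}-2$; both are non--gaps by Remark \ref{rem:nongap}, so nothing is affected.
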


\begin{proof}
 For $d=6,7,8$, from (\ref{eq:gaps-complement}) we derive the
inclusions ${\rm Gaps}_1(d)\subset I_1(d)$, where
$I_1(6)=[11,15]$, $I_1(7)=[16,26]$, and $I_1(8)=[22,39]$, see
(\ref{eq:intervalgaps}).  To show the inverse inclusions, we have
to check that every $g\in I_1(d)$ is a $d$-gap for $d=6,7,8$.

Suppose $\mathcal V_{n,g}(X)\not =\emptyset$ for $X\in \mathcal
L_d$ general. Since the intervals in (\ref{eq:ge}) do not meet
$I_1(d)$, from Proposition \ref{acz2} and $d \geqslant 6$ it
suffices to restrict to $n \geqslant 3$.  On the other hand, if $n
\geqslant 5$, \eqref{eq:gaps2} gives $ 5 \leqslant n < \frac {2(d^
2-2d-10)}{d(d-5)}$, i.e., $3d^2 - 21 d + 20 < 0,$ which
contradicts $d \geqslant 6$. Thus it is enough to consider $n \in
\{3,4\}$.
The remaining possibilities are as follows.\\
\begin{inparaenum} [$\bullet$]
\item$(d,n) = (6,3)$ and $g\in I_1(6)$
as in the first line of \eqref{eq:rescases2};\\
\item $(d,n) = (7,3)$; then
by (\ref{eq:g-prim}), $[16,22]\subseteq {\rm Gaps}_1(7)$. Hence it
remains to eliminate
the values of $g$ as in the third line of \eqref{eq:rescases2}; \\
\item $(d,n) = (8,3)$; using again \eqref{eq:g} and  (\ref{eq:g-prim}),
one reduces to the last line of  \eqref{eq:rescases2};\\
\item if $n = 4$, \eqref{eq:gaps2} yields $ 2 d^2 - 16 d + 20 <0$.
So the only possibility is $d=6$.
From Theorem \ref{thm:cl} we deduce $g > 13$, which leaves the
range of $g$ as in the second line of \eqref{eq:rescases2}.
\end{inparaenum}
\end{proof}

\subsection{Strategy of the proof} \label{ssec:strategy}
By Lemma \ref {lem:gaps} we need to show that $\mathcal
V_{n,g}(X)=\emptyset$ for $X\in \mathcal L_d$  general and $d$,
$n$ and $g$  as in \eqref{eq:rescases2}. A basic ingredient will
be the following result.

\begin{prop}\label{prop:acz} {\rm (see \cite[Prop. 2.8]{cz})}
Let $S$ be a smooth projective surface,  $\mathcal H$ the Hilbert
scheme of curves on $S$, and $\mathcal V_g$ the locally closed
subset of $\mathcal H$ formed by irreducible curves of geometric
genus $g$. For any component $\mathcal V \subseteq \mathcal V_g$
we set $$v:=\dim(\mathcal V) \;\;\, \rm{and} \,\;\;
\kappa:= K_S \cdot \Gamma,$$where $\Gamma$ corresponds  to a general
point in $\mathcal V$. Then $v\leqslant
v_0:=\max\{g,g-1-\kappa\}$.
\end{prop}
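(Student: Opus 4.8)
The plan is to prove Proposition~\ref{prop:acz} by a deformation-theoretic dimension count for the Hilbert scheme of curves on $S$, combined with the elementary inequality $\chi(\mathcal O_\Gamma)\geqslant 1-g$ relating the arithmetic genus of a curve to the geometric genus of its normalization. First I would fix a component $\mathcal V\subseteq \mathcal V_g$, let $\Gamma$ be a general member, and pass to the normalization $\nu\colon \tilde\Gamma\to \Gamma\subset S$, which is a smooth curve of genus $g$. The idea is to view $\mathcal V$ (up to a positive-dimensional bookkeeping for the choice of map versus the image, which is finite here since a general irreducible curve has finitely many automorphisms, indeed the normalization recovers $\Gamma$) as governed by the space of deformations of the morphism $f=\nu\colon \tilde\Gamma\to S$ with target fixed, whose tangent space is $H^0(\tilde\Gamma, f^*T_S)$ and whose obstructions lie in $H^1(\tilde\Gamma, f^*T_S)$. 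Hence
\[
v \;=\; \dim(\mathcal V) \;\leqslant\; h^0(\tilde\Gamma, f^*T_S).
\]

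Next I would estimate $h^0(\tilde\Gamma, f^*T_S)$ via Riemann--Roch on the smooth curve $\tilde\Gamma$. One has
\[
\chi(\tilde\Gamma, f^*T_S) \;=\; \deg(f^*T_S) + \operatorname{rk}(T_S)(1-g) \;=\; -f^*K_S\cdot[\tilde\Gamma] + 2(1-g) \;=\; -\kappa + 2 - 2g,
\]
using $\deg(f^*T_S)=\deg(f^*(-K_S))=-K_S\cdot\Gamma=-\kappa$ (the pushforward of $[\tilde\Gamma]$ is $[\Gamma]$ since $f$ is birational onto its image). Therefore $h^0 = \chi + h^1 = -\kappa+2-2g+h^1(\tilde\Gamma,f^*T_S)$, and the whole problem reduces to bounding $h^1(\tilde\Gamma, f^*T_S)$. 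The two terms in the max come from two different estimates of this $h^1$. For the bound $v\leqslant g$: one uses that $f^*T_S$ sits in an exact sequence $0\to T_{\tilde\Gamma}\to f^*T_S\to N_f\to 0$ (the normal sheaf of the map), whence $h^0(f^*T_S)\leqslant h^0(T_{\tilde\Gamma})+h^0(N_f)$; for $g\geqslant 2$ we have $h^0(T_{\tilde\Gamma})=0$, and since the deformations of the \emph{image} are what matter, $\dim\mathcal V$ is bounded by (essentially) $h^0(N_f)$, and a Clifford-type or direct argument on the curve gives $h^0(N_f)\leqslant g$ in the relevant range; more robustly, one shows directly that if $\dim\mathcal V>g$ then the curves move in a sufficiently large family that a general such curve would have to be rational or the family would be non-reduced, contradicting that $\Gamma$ is a general member of geometric genus $g$. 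For the bound $v\leqslant g-1-\kappa$: by Serre duality $h^1(\tilde\Gamma,f^*T_S)=h^0(\tilde\Gamma, K_{\tilde\Gamma}\otimes f^*\Omega^1_S)$, and one argues that this last group vanishes (or is suitably small) — for instance because a nonzero section would yield a distinguished sub-line-bundle of $f^*\Omega^1_S$ forcing $\Gamma$ to lie in a special locus — so that $h^0(f^*T_S)\leqslant \chi + (\text{small}) \leqslant -\kappa+2-2g + (g+1) = g-1-\kappa$ after the same finite correction; combining the two gives $v\leqslant\max\{g,\,g-1-\kappa\}=v_0$.

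I expect the main obstacle to be making precise the passage from the dimension of the component $\mathcal V$ of the Hilbert scheme (parametrizing \emph{images} $\Gamma$) to the cohomology of $f^*T_S$ on the normalization (which controls deformations of the \emph{map}), and in particular controlling the possibility that $\mathcal V$ is non-reduced so that $\dim\mathcal V$ is strictly smaller than the tangent space dimension — here that works in our favor, but one must argue that the general member being of geometric genus exactly $g$ prevents the estimate from degenerating. Equally delicate is the vanishing of $H^0(\tilde\Gamma,K_{\tilde\Gamma}\otimes f^*\Omega^1_S)$ used for the second branch of the maximum, which should be handled by noting that a nonzero such section gives a factorization of $f$ through a curve or pencil obstructing the genericity of $\Gamma$; since the statement is quoted from \cite{cz}, I would present this argument at the level of detail above and refer to \cite[Prop.~2.8]{cz} for the remaining verifications.
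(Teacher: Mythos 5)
The paper gives no proof of this proposition: it is quoted from \cite[Prop.~2.8]{cz}, so your attempt has to be measured against the standard argument there (normalization, normal sheaf, Clifford/Riemann--Roch dichotomy). Your skeleton points in that direction, but two steps fail concretely. First, the opening reduction $v\leqslant h^0(\tilde\Gamma,f^*T_S)$ is false: $H^0(f^*T_S)$ controls deformations of the map from a \emph{fixed} source curve, and the bookkeeping you wave away is not the (finite) automorphisms but the $3g-3$ moduli of the source, which you never recover. A clean counterexample is $S=\PP^2$ and $\Gamma$ a smooth plane quartic: there $g=3$, $h^0(\Gamma,T_{\PP^2}|_\Gamma)=8$ (Euler sequence), yet $v=\dim|\cO_{\PP^2}(4)|=14=8+(3g-3)$. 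Second, the vanishing of $H^0(\tilde\Gamma,\omega_{\tilde\Gamma}\otimes f^*\Omega^1_S)$ that you need for the branch $v\leqslant g-1-\kappa$ is false: the differential $f^*\Omega^1_S\to\omega_{\tilde\Gamma}$ is generically surjective, so this group essentially contains $H^0(\omega_{\tilde\Gamma}^{\otimes 2}(-R))$ for a ramification divisor $R$, which has dimension about $3g-3$ for $g\geqslant 2$; and even granting your ``$h^1\leqslant g+1$'' the arithmetic does not close, since $-\kappa+2-2g+(g+1)=-\kappa+3-g\neq g-1-\kappa$ unless $g=2$. The branch $v\leqslant g$ is likewise asserted rather than proved (``the family would be non-reduced, contradicting genericity'' is not an argument).

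The repair, which is the actual proof in \cite{cz} and in the literature it draws on, is to bound $v$ by $h^0(\tilde\Gamma,N'_f)$, where $N_f=\operatorname{coker}(T_{\tilde\Gamma}\to f^*T_S)$ is the normal sheaf of $f=\nu$ and $N'_f$ is its quotient by torsion; this is legitimate because $\nu$ is birational onto its image, so the family of images is (up to finite data) the family of maps modulo the source moduli, and the latter is cut out precisely by passing from $f^*T_S$ to $N_f$. Then $N'_f$ is a line bundle on $\tilde\Gamma$ of degree at most $\deg N_f=-\kappa+2g-2$, and the two terms of the maximum come from the dichotomy on $N'_f$ itself, not from two estimates of $h^1(f^*T_S)$: if $N'_f$ is nonspecial, Riemann--Roch gives $h^0(N'_f)=\deg N'_f+1-g\leqslant g-1-\kappa$; if $N'_f$ is special, Clifford's theorem gives $h^0(N'_f)\leqslant\tfrac12\deg N'_f+1\leqslant g$ since a special line bundle has degree at most $2g-2$. (The cases $g\leqslant 1$, where $h^0(T_{\tilde\Gamma})>0$, need the usual separate remark.) As written, your proposal has genuine gaps in both halves of the maximum.
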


Let $d$, $n$ and $g$ be as in \eqref{eq:rescases2}. Let $\cF
\subseteq \mathcal L_n$ be an irreducible closed subset, which is
the parameter space for a flat family of surfaces
 in $\PP^3$ of degree $n$.
 We assume that its general point corresponds to an irreducible surface $\Sigma$.

Consider the incidence relation $I\subseteq U_d \times \cF$,
consisting of all pairs $(X,\Sigma)$ such that $X$ and  $\Sigma$
intersect along a reduced, irreducible  curve $\Gamma$ of
geometric genus $g$. Then  $I$ is locally closed  with  the  projections
$$p: I\to
\mathcal U_d \;\;\; {\rm and} \;\;\;q: I\to \cF.
$$
 If $I'$ is an irreducible component of $I$ which dominates $\cF$
via $q$, then
\begin{equation}\label{eq:dimest}\dim(I') = \dim(\cF) + \dim(q^{-1}(\Sigma)\cap I').
\end{equation}
The main point in our strategy is to estimate
$\dim(q^{-1}(\Sigma)\cap I')$. If $(X,\Sigma)\in I'$ and $C$ is
the intersection of $X$ and $\Sigma$, one has a linear system  of
dimension $N_{d-n}+1$ of surfaces of degree $d$ containing $C$.
Thus we get a  $(N_{d-n}+1)$--dimensional set of pairs
$(X,\Sigma)\in I'$ such that the intersection of $X$ and $\Sigma$
is $C$.

Let $\mathcal V_{d,g}(\Sigma)$ be the locally closed subset of
$\mathcal L_{\Sigma,d}:=\vert \mathcal O_{\Sigma}(d)\vert$  formed
by irreducible curves of geometric genus $g$, and let $\mathcal V
\subseteq \mathcal V_{d,g}(\Sigma)$ be any of its irreducible
component. Applying Proposition \ref{prop:acz} to the minimal
desingularization
\begin{equation}\label{eq:mindesing}
\pi: S \to \Sigma \subset \PP^3
\end{equation}
of $\Sigma$, we
obtain the bound
\begin{equation}\label{eq:v0}
\dim(\mathcal V) \leqslant v_0.
\end{equation}
Therefore, for $\Sigma \in \cF$ general,
\begin{equation}\label{eq:CZ0}
\dim(q^{-1}(\Sigma)\cap I') \leqslant v_0 + N_{d-n}+1.
\end{equation}
Hence by (\ref{eq:dimest}),
\begin{equation}\label{eq:CZ00}
\dim(I') \leqslant \dim(\cF) + v_0 + N_{d-n}+1.
\end{equation}

To prove  that
$\mathcal V_{n,g}(X)=\emptyset$  for $X\in  U_d$ general, one
needs to prove that $p|_{I'}$ is not dominant onto $U_d$, i.e.,
\begin{equation}\label{eq:CZ1}
\dim(I') < N_d
\end{equation}
for any $I'$ as above.
Thus (\ref{eq:CZ00}) yields the following sufficient condition for
\eqref{eq:CZ1} to hold:
\begin{equation}\label{eq:CZ1bis}
\dim(\cF) + v_0 < \Phi(n,d):=N_d - N_{d-n} - 1=
\left\{\begin{array}{cc}
\frac{3}{2}d(d+1) & \mbox{if} \; n=3 \\
 & \\
2d^2 +1 & \mbox{if} \; n=4
\end{array}
\right.
\end{equation}for all pairs $(d,n)$ as in \eqref{eq:rescases2}.

The proof of Theorem \ref {thm:1gaps} reduces to check
\eqref{eq:CZ1bis}  for all possible parameter spaces $\cF$ of
cubics (resp., of quartics) in $\PP^3$, whose general element
$\Sigma$ is irreducible.

\subsection{The cubic case}\label{sec:cubic}
Classification of irreducible cubic surfaces in $\PP^3$ started a
century  and a half ago by Schl\"afli in \cite{Sch} and Cayley in
\cite{Cay}, see e.g., \cite{Dol} for a historical account and
references.  About one hundred  years later, Bruce and Wall
(see~\cite{Bru, BruWal}) reconsidered this classification via the
modern theory of singularities.

\begin{desc}\label{desc1} Let $\Sigma \subset \PP^3$ be
an irreducible cubic surface. Then:  \\
\begin{inparaenum}
\item[(i)]  either $\Sigma$ has at most  Du Val  singularities, or  \\
\item[(ii)] it is a cone over a plane cubic,  or \\
\item[(iii)] it is a scroll which is not a cone.
\end{inparaenum}

Case (i) occurs for a general $\Sigma\in \mathcal L_3$; recall
that $N_3=\dim (\mathcal L_3)=19$, see \eqref{400}.

In case (ii) we will denote by $\mathcal C$  the irreducible
closed subvariety of $\mathcal L_3$, whose general point
corresponds to a cone over a smooth, plane cubic. Clearly
$\dim(\mathcal C)=12$.

In case (iii) we will denote by $\mathcal R$ the irreducible
closed subvariety of $\mathcal L_3$, whose  general point
corresponds to an irreducible scroll $\Sigma$ which is not a cone.
Such a scroll $\Sigma$ appears as  the general projection in
$\mathbb P^ 3$ of a smooth rational normal scroll $S$  of degree 3
in $\mathbb P^ 4$, and $\Sing(\Sigma)$ is a  double line
(see~e.g.,\;\cite{BruWal}). An easy parameter count, which can be
left to the reader, shows that $\dim(\mathcal R)=13$.
\end{desc}

We keep the notation of \S\;\ref {ssec:strategy}.  To prove that
$\mathcal V_{3,g}(X)=\emptyset$ for $X\in \mathcal L_d$  general
and $d$ and $g$ for all cases with $n=3$, i.e., as in the first, third and fourth line of
\eqref{eq:rescases2},  we will show that \eqref {eq:CZ1bis}
holds. One has to analyse  cases (i)---(iii) of
Description \ref {desc1} occurring for $\Sigma$ corresponding to
the general point of $\mathcal F$.

We will denote by $H$ the hyperplane section class of $\Sigma$. By
abuse of notation we will also denote by $H$ its total transform
on the minimal desingularization $S$ of $\Sigma$ as in  \eqref
{eq:mindesing}.

\subsubsection{Case (i)} We have $\omega_{\Sigma} \cong \Oc_{\Sigma}(-H)$
 and  $K_S = \pi^*(K_{\Sigma}) = - H$
 (see, e.g., \cite[Lemma 1.2.2]{BeSo}).

Let $\mathcal V\subseteq \mathcal V_{d,g}(\Sigma)$ be an
irreducible component, let $C$ be the curve corresponding to its
general point, and let $\Gamma$ be the proper transform of $C$ on
$S$. Then $\Gamma \sim d H - D$, where $D \geqslant 0$ is a
$\pi$--exceptional divisor, which takes into account if $C$ passes
through some of the singularities of $\Sigma$. Since every
irreducible component of $D$ is a $(-2)$-curve, $D\cdot K_S=0$.
Hence  $\kappa = K_S \cdot \Gamma = - 3d$.

By \eqref {eq:v0} and Proposition \ref{prop:acz} we have
$\dim(\mathcal V) \leqslant v_0 = 3d + g-1$. Since  $\dim(\cF)
\leqslant 19$, a sufficient condition for \eqref {eq:CZ1bis} to
hold is
\begin{equation}\label{eq:utile}
3d^2 - 3d - 36 > 2g.
\end{equation}
Then \eqref {eq:utile} holds for all  the $(d,g)$ in
\eqref{eq:rescases2} which correspond to $n=3$. Hence the same is
true for \eqref {eq:CZ1bis}.

\subsubsection{Case (ii)}
One has  \begin{equation}\label{eq:Fconegen} \dim(\cF) \leqslant
\dim(\mathcal C) = 12.
\end{equation}
Let $Y \subset \PP^2$ be the plane cubic which is the base of the
cone $\Sigma$. There are the following possibilities:
\begin{inparaenum}[(a)]
\item $Y$ is smooth;
\item $Y$ is nodal;
\item $Y$ is cuspidal.
\end{inparaenum} We will discuss cases (a) and (b)  only, since (c) is
similar to  (b) and can be left to the reader.

\subsubsection{Case (ii,a)}\label {ssec:Case (ii,a)}
The minimal desingularization of $\Sigma$ is   $S = \PP_Y(\mathcal
\cO_Y \oplus \cO_Y(1))$, which is a ruled surface with base $Y$.
We denote by $F$ the numerical equivalence class of a fibre of the
structure morphism $\sigma: S\to Y$ and by $E$ the section with
$E^ 2=-3$ which is contracted to the vertex $\mathfrak v$ of
$\Sigma$. One has $H\equiv E+3F$ and $K_S \equiv -E-H$. With the
usual notation,
let $\Gamma$ be the proper transform of $C\in \mathcal V_{d,g}(\Sigma)$ on $S$. \\
\begin{inparaenum}
\item[(\dag)] If $C$ does not pass through $\mathfrak v$, then $\Gamma \equiv dH$,
$\kappa = K_S \cdot \Gamma = - 3d$, and by Proposition
\ref{prop:acz} we find the upper bound $v_0 = 3d+g-1$ in \eqref
{eq:v0}.
Then the discussion proceeds as in case (i) above, with the same conclusion.   \\
\item[(\ddag)] If $C$ passes through $\mathfrak v$, then
one has $\Gamma\equiv dH-E$. Indeed, a priori one has $\Gamma \equiv dH - mE$. On the other hand, since $X$ is smooth at 
$\mathfrak v$, the general ruling of the cone intersects $X$ at $d-1$ points off $\mathfrak v$; 
hence $\Gamma \cdot F = d-1$, which proves $m=1$ (the same holds in all cases below, 
dealing with cones).

Thus $\kappa = K_S \cdot \Gamma = - 3d - 3$.
By Proposition \ref{prop:acz}, we find $v_0 = g + 3d + 2$. Taking
into account \eqref {eq:Fconegen} and proceeding as in case (i),
one sees that \eqref{eq:CZ1bis} holds in all cases of
\eqref{eq:rescases2} with $n=3$.
\end{inparaenum}

\subsubsection{Case (ii,b)} As before, \eqref {eq:Fconegen} holds.
The minimal desingularization $S$ of $\Sigma$ is the Hirzebruch
surface $\mathbb{F}_3 = \PP(\cO_{\PP^1} \oplus \cO_{\PP^1}(-3))$.
We denote again by $F$ the numerical equivalence class of a fibre
of the structure morphism $\sigma: S\to \PP^ 1$ and by $E$ the
section with $E^ 2=-3$ which is contracted to the vertex
$\mathfrak v$ of $\Sigma$. One has $H\equiv E+3F$ and  $K_S \equiv
- 2 H + F$.

Let $\Gamma$ and $C$ be as usual. Two cases have to be analyzed. \\
\begin{inparaenum}
\item[(\dag)] If $C$ does not pass through
$\mathfrak v$, then $\Gamma \equiv dH$.
One has $\kappa = K_S \cdot \Gamma = - 5d$ and  $v_0= 5d + g -1$.
By  \eqref {eq:Fconegen}, to prove that \eqref{eq:CZ1bis} holds,
it suffices to show that $3d^2 - 7d - 22 > 2g$ for all $d$ and $g$
in \eqref{eq:rescases2} with $n=3$.
A direct computation confirms that this is  the case. \\
\item[(\ddag)] $C$  passes through $\mathfrak v$. As in case (ii,a)--(\ddag),
one has $\Gamma \equiv dH - E$.   Thus, $\kappa = K_S \cdot \Gamma
= - 5d -1$ and similar computations as in the previous case show
that \eqref{eq:CZ1bis} holds.
\end{inparaenum}

\subsubsection{Case (iii)} One has
\begin{equation}\label{eq:Fscroll}
\dim(\cF) \leqslant \dim(\mathcal R) = 13.
\end{equation}

The minimal desingularization $S$ of $\Sigma$ is isomorphic to
$\mathbb{F}_1$, with the structure map $\sigma: S\to \PP^ 1$. We
denote by $E$ the section of $S$ with $E^ 2=-1$ and with $F$ a
fibre. Then $H\equiv E+2F$ and  $K_S \equiv - 2E - 3 F$. Since
$\Gamma \equiv dH$, we get $\kappa = K_S \cdot \Gamma = - 5d$,
hence we find $v_0 = 5d + g - 1$ as in case (ii,b)--(\dag).   By
\eqref{eq:Fscroll}, to prove that \eqref{eq:CZ1bis} holds it
suffices to verify that  $3d^2 - 7d - 24 > 2g,$ for all $d$ and
$g$ in \eqref{eq:rescases2} with $n=3$.
 A direct check shows that this is the case.

\vskip 15pt

In conclusion, the above analysis shows that, for $X\in U_d$
general, one has $\mathcal V_{n,g}(X)=\emptyset$ for $d,n, g$ as
in \eqref {eq:rescases2} with $n=3$.

\subsection{The quartic case}\label{sec:quartic}
The only case left  from \eqref{eq:rescases2} is $d=6, n=4, g\in
[14,15]$.  To finish the proof of Theorem \ref {thm:1gaps} we have
to verify that for $X\in U_6$ general, one has $\mathcal
V_{4,g}(X)=\emptyset$  for $g\in \{14,15\}$.

We will keep the notation as in \S \ref {ssec:strategy}. From
\eqref{eq:CZ1bis},  one has $\Phi(4,6) = 73$ and $\dim(\cF)
\leqslant 34 = \dim (\mathcal L_4)$. Therefore, for
\eqref{eq:CZ1bis} to hold, it suffices to prove the upper bound
\begin{equation}\label{eq:CZquartic}
v_0 \leqslant  39.
\end{equation} This is what we will do for all cases discussed below,
except the last one, where the argument is different.

\subsubsection{Classification of quartic surfaces}
The classification of  irreducible quartic surfaces in $\PP^3$ is
as old as that of cubics, see e.g., \cite{Dol}. Similarly as for
cubics, we will use a modern version elaborated in
\cite{IsNa,Um,Um2,Ur}, which we shortly recall here. For any such
quartic $\Sigma$ one has $\omega_{\Sigma} \cong \Oc_{\Sigma}$. As
usual, we let $\pi: S \to \Sigma$ be the minimal
desingularization, and we  keep the notation as in the cubic case.

First we treat the case $\Sigma$ normal (see \cite{Um,Um2}).  If
$p \in {\rm Sing}(\Sigma)$, the {\em geometric genus} of $p$ is
defined to be
\[
p_g(p)=\dim_{\mathbb{C}}((R^1\pi_*\cO_S)_p)
\]
(see \cite{Wag} or \cite[Def.\;1]{Um})).
We set$${\rm Irrat}(\Sigma) := \{ p \in
{\rm Sing}(\Sigma) \; | \; p_g(p) >0\},$$which is the set of {\em
irrational singularities} of $\Sigma$. 

\begin{prop}\label{prop:Umezu}{\rm (cf. \cite[Propositions 5,7,8,\;Theorem 1]{Um})}
Let $\Sigma \subset \PP^3$ be a normal, irreducible quartic surface. One has:\\
\begin{inparaenum}
\item [(a)] if $p\in {\rm Sing}(\Sigma)$ and $p_g(p)=0$, then $p$ is a Du Val singularity;\\
\item [(b)] there exists a unique effective divisor $E$ on $S$ such that
$\cO_S(E) \cong \omega_S^{\vee}$.
\end{inparaenum}

Moreover:\\
\begin{inparaenum}
\item [(i)] if ${\rm Irrat}(\Sigma) = \emptyset$, then $S$ is a $K3$ surface, i.e., $E=0$;\\
\item [(ii)] if ${\rm Irrat}(\Sigma) \neq \emptyset$, then $S$ is birationally
equivalent to a ruled surface, $E > 0$ and its connected components bijectively
correspond via $\pi$ to singularities in ${\rm Irrat}(\Sigma)$.\end{inparaenum}

Furthermore, if $q := h^1(\cO_S)$, then: \\
\begin{inparaenum}
\item[($ii$-1)] if $q \neq 1$, then ${\rm Irrat}(\Sigma)$ consists of  a single
 point $p$ such that $p_g(p) = q+1$;\\
\item[($ii$-2)] if $q=1$, then   ${\rm Irrat}(\Sigma)$ consists either of one point $p$,
with  $p_g(p) = 2$, or of two points
$p_i$, for $i=1,2$, with $p_g(p_i)= 1$, that are both {\em
simple elliptic}, i.e., $\pi^{-1}(p_i)$ is a smooth, irreducible
elliptic curve.
\end{inparaenum}
\end{prop}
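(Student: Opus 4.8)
The plan is to follow the stated attributions rather than reprove this classification from scratch, since Proposition \ref{prop:Umezu} is quoted from Umezu's work (\cite[Propositions 5,7,8,\;Theorem 1]{Um}); my task is to explain how its pieces fit together geometrically and which facts control each dichotomy. The central invariant is the effective anticanonical divisor $E$ on the minimal desingularization $\pi:S\to\Sigma$: the whole proposition organizes the normal irreducible quartics according to whether $E$ vanishes, and if not, according to the number and type of its connected components. Since $\Sigma$ is a quartic, the adjunction/dualizing-sheaf computation gives $\omega_\Sigma\cong\cO_\Sigma$, so on $S$ one has $\omega_S\cong\pi^*\omega_\Sigma(-E')=\cO_S(-E')$ for the appropriate effective exceptional correction $E'$; this is precisely the mechanism that produces a \emph{unique} effective $E$ with $\cO_S(E)\cong\omega_S^\vee$, giving part (b), and it identifies $E$ as supported on the exceptional locus over the non-Du Val singularities.

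First I would establish part (a): a normal surface singularity $p$ has $p_g(p)=\dim_\C(R^1\pi_*\cO_S)_p=0$ exactly when it is rational, and a rational Gorenstein (here hypersurface, hence Gorenstein) surface singularity is a rational double point, i.e.\ Du Val. This is the classical Artin characterization, and it immediately yields (a) and, combined with the definition of $\mathrm{Irrat}(\Sigma)$, shows that $E$ is supported over $\mathrm{Irrat}(\Sigma)$. Next I would prove (b) by the adjunction computation above, noting uniqueness follows because $\omega_S^\vee$ is a fixed line bundle whose only effective representative is the exceptional $E$ (there are no moving sections since $H^0(\omega_S^\vee)$ restricted to the fibers is controlled by the $K3$/ruled dichotomy). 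Then (i) is the case $\mathrm{Irrat}(\Sigma)=\emptyset$: all singularities are Du Val, their minimal resolution leaves $\omega_S\cong\cO_S$ and $h^1(\cO_S)=0$, so $S$ is a $K3$ surface and $E=0$. For (ii), when $\mathrm{Irrat}(\Sigma)\ne\emptyset$ we have $E>0$, the Kodaira dimension drops (an anticanonical effective divisor forces $S$ to be uniruled, hence birationally ruled by Enriques classification), and the connected components of $E$ match the connected exceptional fibers, which biject with the irrational singular points.

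The finer classification (ii-1), (ii-2) is where the real content sits, and this is the step I expect to be the main obstacle. Here one sets $q:=h^1(\cO_S)$ and must relate it to the local invariant $p_g(p)$ via the Leray spectral sequence for $\pi$: one has $h^1(\cO_S)=h^1(\cO_\Sigma)+\sum_{p}p_g(p)$, and since $\Sigma$ is a quartic with $h^1(\cO_\Sigma)$ computable from its normality and degree, this pins down $\sum_p p_g(p)=q+1$. The delicate part is controlling how this total is distributed: showing that for $q\ne1$ it must concentrate at a single point with $p_g(p)=q+1$, whereas for $q=1$ (so the total is $2$) one gets either one point of geometric genus $2$ or two simple-elliptic points each of genus $1$. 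Deciding simple-ellipticity (that $\pi^{-1}(p_i)$ is a smooth irreducible elliptic curve) requires invoking the structure of minimally elliptic and more general elliptic singularities on a Gorenstein surface, and ruling out configurations that would violate the degree-$4$ constraint in $\PP^3$; this is exactly the case analysis carried out in Umezu's cited propositions, which I would invoke directly rather than redo. The remaining verification is bookkeeping: checking that the numerical constraints imposed by $\Sigma\subset\PP^3$ of degree $4$ leave no further possibilities beyond those listed.
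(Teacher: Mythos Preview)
The paper does not actually prove this proposition: it is stated with the attribution ``(cf.\ \cite[Propositions 5,7,8,\;Theorem 1]{Um})'' and no argument is given, so there is no proof in the paper to compare your sketch against. Your proposal goes well beyond what the paper does by outlining the mechanism behind Umezu's result, and the overall architecture you describe (adjunction giving $\omega_S\cong\cO_S(-E)$ with $E\geqslant 0$ exceptional, Artin's characterization for (a), the $K3$/ruled dichotomy for (i)--(ii), and Leray for the numerics in (ii-1)--(ii-2)) is the right one.

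That said, there is a genuine slip in your Leray computation. You write $h^1(\cO_S)=h^1(\cO_\Sigma)+\sum_p p_g(p)$, but this ignores the $H^2$ term. The five-term sequence from Leray for $\pi$ reads
\[
0\to H^1(\cO_\Sigma)\to H^1(\cO_S)\to H^0(R^1\pi_*\cO_S)\to H^2(\cO_\Sigma)\to H^2(\cO_S)\to 0,
\]
and for a quartic one has $h^1(\cO_\Sigma)=0$, $h^2(\cO_\Sigma)=1$, while $h^2(\cO_S)=h^0(\omega_S)=h^0(\cO_S(-E))=0$ when $E>0$. This yields $q=\sum_p p_g(p)-1$, i.e.\ $\sum_p p_g(p)=q+1$, which is the conclusion you state; but your displayed formula would give $q=\sum_p p_g(p)$ instead. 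Also, your uniqueness argument for $E$ is vaguer than necessary: since $E$ is $\pi$-exceptional and $\Sigma$ is normal, $\pi_*\cO_S(E)=\cO_\Sigma$, hence $h^0(\cO_S(E))=h^0(\cO_\Sigma)=1$, which is the clean reason.
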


In case (ii) of Proposition \ref {prop:Umezu}, i.e., when $\Sigma$
is a normal quartic surface in $\proj^3$ with irrational singular
points, there is a detailed classification in \cite{IsNa}, which
we will need to go through later.
 It can be briefly summarized as follows.

\begin{prop}\label{prop:IsNa} Let $\Sigma \subset \PP^3$ be a normal,
irreducible quartic surface such that ${\rm Irrat}(\Sigma) \neq \emptyset$. Then either\\
\begin{inparaenum}
\item [(i)] $q=0$ and $\Sigma$ is rational, or\\
\item [(ii)] $q=1$ and $\Sigma$ is birational to an elliptic ruled surface or\\
\item [(iii)] $q=3$ and $\Sigma$ is a cone over a smooth plane quartic curve.
\end{inparaenum}
\end{prop}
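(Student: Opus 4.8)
\emph{Proof (proposal).}
The statement is a summary of the classification of normal quartic surfaces with irrational singularities carried out in \cite{IsNa}; the plan is to give the skeleton of that classification, reducing it to the structural results already recalled. Write $\pi\colon S\to\Sigma$ for the minimal desingularization and, as in \S\ref{sec:quartic}, let $H$ denote both the hyperplane class of $\Sigma$ and its pull--back to $S$, so that $H$ is big and nef, $H^2=4$ and $h^0(S,H)=4$. By Proposition \ref{prop:Umezu}(ii) the surface $S$ is birationally ruled, say over a smooth curve $C$ of genus $q:=h^1(\Oc_S)$, and $-K_S\sim E>0$ with $E$ supported on the $\pi$--exceptional locus over $\mathrm{Irrat}(\Sigma)$. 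The first step is an adjunction computation: since $\Sigma$ is normal, $\mathrm{Sing}(\Sigma)$ is finite, so a general plane $\Pi\subset\PP^3$ meets $\Sigma$ in a smooth irreducible plane quartic $\Gamma$, whose preimage $\widetilde\Gamma=\pi^{-1}(\Gamma)\cong\Gamma$ is a smooth curve of genus $3$ lying in $|H|$; hence $4=2\cdot3-2=H^2+H\cdot K_S=4+H\cdot K_S$, giving $H\cdot K_S=0$ and $H\cdot E=0$. Moreover $\widetilde\Gamma^2=4>0$, so $\widetilde\Gamma$ is not contained in a fibre of the ruling $S\to C$; the induced morphism $\widetilde\Gamma\to C$ is therefore surjective, which forces $q\le3$, and Riemann--Hurwitz applied to $\widetilde\Gamma\to C$, of degree $H\cdot f$ with $f$ a general fibre, yields $H\cdot f=1$ when $q=3$ and $H\cdot f=2$ when $q=2$.

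From here the three cases are read off. If $q=3$, then $H\cdot f=1$, so the fibres of $S\to C$ are mapped to lines and $\Sigma$ is swept out by a $1$--dimensional family of lines, i.e.\ a line scroll; as in the cubic case (Description \ref{desc1}(iii)), a non--cone line scroll of degree $\ge3$ in $\PP^3$ has a $1$--dimensional singular locus, so the normality of $\Sigma$ forces it to be a cone, and its base must be a \emph{smooth} plane quartic since the cone over a singular plane quartic is singular along the line joining the vertex to a singular point. This is case (iii); conversely, the cone over a smooth plane quartic is a normal quartic with $q=3$. If $q\in\{0,1\}$, then $S$, being birational to $\Sigma$, is either rational or birational to an elliptic ruled surface, hence so is $\Sigma$: these are cases (i) and (ii), and one checks that both are realized by explicit normal quartics (for instance quartics with a single suitable irrational double point for $q=0$, and suitable birationally elliptic--ruled quartics for $q=1$).

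It remains to exclude $q=2$, which is the main obstacle. If $S$ happens to be relatively minimal, say $S=\PP(\mathcal E)\to C$, this is a short computation in $\mathrm{NS}(S)=\Z C_0\oplus\Z f$: writing $H\equiv aC_0+bf$ with $a=H\cdot f$, the relation $a(aC_0^2+2b)=H^2=4$ gives $a\in\{1,2,4\}$; the value $a=4$ is impossible by parity, for $a=2$ the canonical bundle formula gives $H\cdot K_S=4q-6\ne0$ contradicting $H\cdot K_S=0$, and $a=1$ forces $H\cdot K_S=2q-6=0$, i.e.\ $q=3$. Thus $q=2$ can occur only when $S$ is a non--trivial blow--up of a relatively minimal ruled surface over a curve of genus $2$; eliminating this case requires combining the effectivity and negative--definite contractibility of $-K_S=E$ over that genus--$2$ base with the very ampleness of $H$, and is precisely the content of the detailed analysis in \cite{IsNa}. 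I expect this last exclusion to be the only non--routine ingredient; everything else is bookkeeping with adjunction, Riemann--Hurwitz and the classical description of line scrolls in $\PP^3$.
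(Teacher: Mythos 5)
The paper does not actually prove this proposition: it is stated as a summary of the classification in \cite{IsNa} (resting in turn on Umezu's work \cite{Um,Um2}), so there is no internal argument to compare yours against. Judged on its own merits, your sketch correctly handles the soft part. The adjunction computation giving $H\cdot K_S=0$, the use of the Albanese fibration of the birationally ruled surface $S$ to produce a surjection from a genus-$3$ plane section onto the genus-$q$ base (hence $q\leqslant 3$), and the Riemann--Hurwitz bookkeeping forcing $H\cdot f=1$ when $q=3$ and $H\cdot f=2$ when $q=2$ are all sound; the identification of the $q=3$ case with the cone over a smooth plane quartic is also correct, modulo the classical (true, but here merely asserted) fact that a normal degree-$4$ line scroll in $\PP^3$ which is not a cone does not exist.

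The genuine gap is exactly where you place it: the exclusion of $q=2$. Your relatively-minimal computation ($H\cdot K_S=4q-6\neq 0$ when $H\cdot f=2$) only rules out the case where the minimal desingularization $S$ is itself geometrically ruled over the genus-$2$ base, and that is not the relevant case: $S$ carries the effective $\pi$-exceptional anticanonical cycle $E>0$ with $H\cdot E=0$, so it is typically far from relatively minimal. Eliminating blow-ups of genus-$2$ ruled surfaces is precisely the content of the Umezu/Ishii--Nakayama analysis (the structure of an effective, contractible anticanonical divisor on a surface birationally ruled over a genus-$q$ curve, combined with $h^0(S,H)=4$, $H^2=4$ and $H$ birational, is what forces $q\in\{0,1,3\}$), and you explicitly defer it to \cite{IsNa}. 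As written, then, your argument establishes the trichotomy only conditionally on the unproved assertion $q\neq 2$, so it is not a complete proof; it is, however, a faithful reduction of the proposition to the single nontrivial step, which both you and the paper ultimately delegate to the same reference.
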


As for the non--normal case, we have:

\begin{prop}\label{prop:Umezu2}{\rm (cf.  \cite[Lemma\;2.3]{Ur})}
Let $\Sigma \subset \PP^3$
be a non--normal, irreducible quartic surface. Then $S$ is either
a scroll over a smooth curve of genus $2$, or an elliptic scroll
or a rational surface.
\end{prop}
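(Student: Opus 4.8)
The plan is to combine the classification of surfaces with the conductor formula for the normalization. Write $\nu\colon\bar\Sigma\to\Sigma$ for the normalization of $\Sigma$; since $S$ is smooth, the morphism $\pi\colon S\to\Sigma$ factors as $S\xrightarrow{\rho}\bar\Sigma\xrightarrow{\nu}\Sigma$, and one checks easily that $S$, being the minimal desingularization of $\Sigma$, is also the minimal resolution of $\bar\Sigma$. As $\Sigma$ is a quartic hypersurface it is Gorenstein with $\omega_\Sigma\cong\Oc_\Sigma$, so the conductor of $\nu$ cuts out an effective divisor $D$ on $\bar\Sigma$ with $\omega_{\bar\Sigma}\cong\Oc_{\bar\Sigma}(-D)$; moreover $D\neq 0$ precisely because $\Sigma$ is non--normal (the non--normal locus of the Cohen--Macaulay surface $\Sigma$ is a non--empty curve, and $D$ is its preimage carrying the conductor scheme structure). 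Pulling back to $S$ and using that all discrepancies of a minimal resolution are $\leqslant 0$, we get $K_S=-\rho^*D-\Delta$ with $\Delta\geqslant 0$ supported on the $\rho$--exceptional locus; hence $-K_S$ is effective and non--zero. Consequently no positive multiple of $K_S$ is effective, so $\kappa(S)=-\infty$, and by the Enriques classification $S$ is either rational or birationally ruled over a smooth curve $B$ with $g(B)=q:=h^1(\Oc_S)\geqslant 1$. In the rational case there is nothing more to prove, so assume $q\geqslant 1$.

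Next I would bound $q$ using the hyperplane class $H=\pi^*\Oc_\Sigma(1)$, which is the pullback of a Cartier divisor from $\bar\Sigma$. By the projection formula $H\cdot\Delta=0$, while $H\cdot\rho^*D=\nu^*\Oc_\Sigma(1)\cdot D>0$ because $D$ is a non--zero effective divisor on $\bar\Sigma$ and $\nu^*\Oc_\Sigma(1)$ is ample. Therefore $H\cdot(-K_S)>0$, and since $H^2=4$ and $H^2+H\cdot K_S$ is even we get $H\cdot K_S\leqslant -2$. Adjunction on $S$ then gives, for the proper transform $\tilde C$ on $S$ of a general hyperplane section $C$ of $\Sigma$, that $2p_a(\tilde C)-2=H^2+H\cdot K_S\leqslant 2$, i.e.\ $p_a(\tilde C)\leqslant 2$. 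For $C$ general, $\tilde C$ is the normalization of the plane quartic $C$, so the general plane section of $\Sigma$ has geometric genus $\leqslant 2$. Finally, composing $\tilde C\hookrightarrow S$ with $S\to B$ yields a morphism $\tilde C\to B$ which is non--constant (a general hyperplane section cannot lie in a fibre), whence $g(B)\leqslant p_a(\tilde C)\leqslant 2$.

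It remains, in the cases $q=1,2$, to identify $S$ with a geometrically ruled surface (a scroll) — that is, to show that the ruling $S\to B$ is cut out on $\Sigma$ by a pencil of lines and that $S$ carries no $(-1)$--curves. I expect this to be the main obstacle: the numerical data above do not by themselves exclude extra blow--ups or rulings of higher degree, and one is forced to appeal to the classical classification of ruled surfaces of degree $4$ in $\PP^3$ (see \cite{Ur}), verifying that for a non--normal quartic with $q\geqslant 1$ the desingularization is already minimal and its rulings are lines. Granting this, $g(B)=1$ gives the elliptic scroll and $g(B)=2$ the scroll over a smooth genus $2$ curve, completing the proof. (A variant would be to run the entire analysis on $\bar\Sigma$ through the normal--surface results of Propositions \ref{prop:Umezu} and \ref{prop:IsNa}, but the conductor reduction above seems the most economical route to the statement.)
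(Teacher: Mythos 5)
The paper does not actually prove this statement: it is quoted directly from Urabe's classification of non--normal quartic surfaces (\cite[Lemma 2.3]{Ur}), so any argument from first principles is by definition a different route. Your reduction is correct as far as it goes. The conductor computation ($\omega_\Sigma\cong\Oc_\Sigma$, hence $\omega_{\bar\Sigma}\cong\Oc_{\bar\Sigma}(-D)$ with $D>0$ because the non--normal locus of the hypersurface $\Sigma$, being $S_2$, is a non--empty curve), the conclusion that $-K_S$ is a non--zero effective class so that $S$ has negative Kodaira dimension, and the chain $g(B)\leqslant p_a(\tilde C)\leqslant 2$ obtained from $H\cdot K_S\leqslant -2$ are all sound; together they give the trichotomy \emph{at the level of birational type}: rational, or birationally ruled over a base of genus $1$ or $2$.

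The step you flag at the end is, however, a genuine gap, and it is unevenly distributed between the two irrational cases. For $g(B)=2$ it closes with what you already have: Riemann--Hurwitz for $\tilde C\to B$ gives $2p_a(\tilde C)-2\geqslant (H\cdot F)\bigl(2g(B)-2\bigr)=2\,(H\cdot F)$, and since $2p_a(\tilde C)-2\leqslant 2$ this forces $H\cdot F=1$; then every reducible fibre of $S\to B$ would contain a $(-1)$--curve $E$ with $H\cdot E=0$ (a reducible genus--$0$ fibre contains at least one $(-1)$--curve, and $H\cdot F=1$ leaves at most one component not contracted by $\pi$), i.e.\ a $(-1)$--curve contracted by $\rho$, contradicting minimality; hence $S$ is geometrically ruled and its fibres map to lines. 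For $g(B)=1$, by contrast, your numerics do not exclude $H\cdot F=2$, that is, a non--normal quartic swept out by an elliptic pencil of conics --- precisely the configuration that does occur among \emph{normal} quartics of elliptic ruled type treated later in \S\,\ref{sec:quartic} (cf.\ \cite{IsNa}). Ruling this out really does require the classification of \cite{Ur} (or an independent analysis of where the one--dimensional singular locus can sit on such a conic bundle), so your proof is complete only modulo the same citation the paper itself relies on; its value is that it isolates the single case where that citation does real work.
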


We will examine the various cases, first treating the normal, then
the non--normal ones. Remember that to accomplish the proof of
Theorem \ref {thm:1gaps} it suffices to establish inequality
\eqref {eq:CZquartic}.

\subsubsection{The $K3$ case} This is case (i) in Proposition \ref {prop:Umezu}. Then
$v_0 = g \leqslant 15$ (cf. Proposition \ref{prop:acz}). So \eqref
{eq:CZquartic} holds.
\bigskip

\subsubsection{Normal quartic surfaces with an irrational singularity}
Next we turn to case (ii) in Proposition \ref {prop:Umezu}, which,
according to Proposition \ref {prop:IsNa}, gives rise to various
subcases. We refer to \cite{IsNa} for details. To make the reading
more accessible,
 let us first overview
the terminology and the main classification principle in
\cite{IsNa}. The latter uses the triplets 
$(X,B,G)$ consisting in a smooth, projective surface $X$, a smooth
non-hyperelliptic curve $B$ of genus 3 on $X$, and an effective
anticanonical divisor $G\in |-K_X|$, where $G\neq 0$. Such a
triplet {\em satisfies condition} $\mathcal{C}_r$ if $K_X+B$ is
nef and $B\cdot G=r$. If $r\ge 1$, then blowing up $\sigma: X'\to
X$ at a point of $B\cap G$ leads to a $\mathcal{C}_{r-1}$-triplet
$(X', B', G')$ with $B'=\sigma^*(B)-F$ and $G'=\sigma^*(B)-F$,
where $F$ is the exceptional $(-1)$-curve. After $r$ blowups one has a birational morphism
$\rho: S\to X$ and one
arrives at a $\mathcal{C}_0$-triplet  $(S,H,E)$ where
\begin{equation}\label{eq: separation} 
H := \rho^*(B) - \Delta\quad\mbox{and}\quad E : = \rho^*(G) - \Delta,
\end{equation} where $\Delta$ is the total $\rho$--exceptional divisor.
This process is called {\em separation} (\cite[p.\;947]{IsNa}) and $(S,H,E)$ is
called the (result of the) separation of $(X, B, G)$. Notice that the
divisor $E$ can be reducible and/or non-reduced, even if $G$ is
reduced and irreducible. This may happen if $G$ is singular
and $r\ge 1$.

One says that a $\mathcal{C}_0$-triplet $(S,H,E)$  is a {\em basic
triplet} (our terminology here slightly differs from the one in \cite{IsNa}),  if
$H$ meets every $(-1)$-curve on $S$. There exists a classification
of all basic triplets into 4 types $A,B,C,D$ (\cite
[Theorem 1.7]{IsNa}), together with a list of examples of each
type (\cite [\S 2]{IsNa}) obtained via separation, that we will
permanently address below. The main theorem in  \cite[\S 3]{IsNa}
asserts that this list is exhaustive, and so describes all the
normal quartic surfaces in $\proj^3$ with irrational
singularities. Together with \cite[Prop.\;1.4]{IsNa},
this yields the following theorem.

\begin{thm}\label{thm: C-triplets} Any basic triplet
$(S,H,E)$  arises as the minimal
desingularization $\pi=\varphi_{|H|}\colon S\to\Sigma$ of a normal
quartic surface $\Sigma$   in $\proj^3$ with irrational singular
points, where $H$ is the pullback of a hyperplane section of
$\Sigma$ which does not pass through any irrational singular
point, and $E$ is an effective $\pi$-exceptional anticanonical
divisor on $S$.
Conversely, the minimal desingularization $\pi: S\to\Sigma$ of a
normal quartic surface $\Sigma$ in $\proj^3$ with an irrational
singular point yields a  basic triplet $(S,H,E)$, with $H$ and $ E$ as before.
\end{thm}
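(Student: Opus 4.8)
The plan is to prove Theorem~\ref{thm: C-triplets} by combining the main classification results of \cite{IsNa} with the characterization of adjoint linear systems on surfaces, making the two directions of the equivalence precise. For the \emph{converse} direction, I would start from a normal quartic $\Sigma\subset\proj^3$ with an irrational singular point and its minimal desingularization $\pi\colon S\to\Sigma$. Since $\omega_\Sigma\cong\Oc_\Sigma$, adjunction on $S$ gives an effective $\pi$-exceptional anticanonical divisor $E$ with $\Oc_S(E)\cong\omega_S^\vee$; this is exactly Proposition~\ref{prop:Umezu}(b). Choosing a general hyperplane section of $\Sigma$ through no irrational singular point and letting $H$ be its pullback, one checks that $|H|$ is base-point free and that $H$ meets every $(-1)$-curve on $S$ (any $(-1)$-curve would be contracted by $\pi$ or would be a component of a fibre of the ruling in case (ii) of Proposition~\ref{prop:Umezu}, and in either situation $H$ restricts nontrivially because $\varphi_{|H|}$ is the birational morphism onto $\Sigma$). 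Moreover $K_S+H=-E+H$ is nef since $H$ is the pullback of an ample divisor and $E$ is $\pi$-exceptional, and $H\cdot E=0$, so $(S,H,E)$ is a $\mathcal C_0$-triplet; base-point freeness of $|H|$ on $(-1)$-curves makes it a basic triplet. This produces the basic triplet from the quartic.

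For the \emph{direct} direction, I would invoke the main theorem of \cite[\S3]{IsNa}, which asserts that every basic triplet $(S,H,E)$ occurs, via the separation construction from some triplet $(X,B,G)$ satisfying condition $\mathcal C_r$, in the exhaustive list of examples of types $A$, $B$, $C$, $D$ compiled in \cite[\S2]{IsNa}; together with \cite[Prop.~1.4]{IsNa}, which guarantees that for each such triplet the linear system $|H|$ maps $S$ birationally onto a normal quartic surface $\Sigma\subset\proj^3$ with $\pi=\varphi_{|H|}$ its minimal desingularization, and that $\Sigma$ has an irrational singular point corresponding to the support of $E$. The content to verify here is mainly bookkeeping: that the numerical data $H^2=4$, $H\cdot K_S=-H\cdot E=0$, $K_S^2=E^2$ coming from a $\mathcal C_0$-triplet force $\deg\varphi_{|H|}(S)=4$; that $\varphi_{|H|}$ is birational onto its image (non-hyperellipticity of $B$ is preserved under separation, which is where the genus-$3$, non-hyperelliptic hypothesis on $B$ enters); and that the singular point over $\operatorname{Supp}(E)$ is irrational precisely because $E>0$ forces $R^1\pi_*\Oc_S\neq 0$ there, matching the definition of $\operatorname{Irrat}(\Sigma)$.

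The main obstacle I expect is reconciling the two bookkeeping conventions—the ``basic triplet'' terminology used here (which, as the text flags, differs slightly from \cite{IsNa}) with the conditions $\mathcal C_r$ and the separation process—so that ``$H$ meets every $(-1)$-curve on $S$'' is genuinely equivalent to the hypothesis under which \cite[Prop.~1.4]{IsNa} and the classification of \cite[Thm.~1.7]{IsNa} are stated. Concretely, one must check that a $\mathcal C_0$-triplet whose $H$ fails to meet some $(-1)$-curve can be blown down along that curve to a smaller triplet, so that the ``basic'' ones are exactly the minimal models in the list, and that this blow-down is compatible with $E=\rho^*(G)-\Delta$ and with the morphism $\varphi_{|H|}$. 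Once this dictionary is in place, the theorem follows by quoting \cite[\S3]{IsNa} and \cite[Prop.~1.4]{IsNa} for the direct implication and by the adjunction/base-point-freeness argument above for the converse; I would keep the write-up short, since the substance is entirely contained in \cite{IsNa} and the role of Theorem~\ref{thm: C-triplets} in this paper is only to set up the case analysis of \S\ref{sec:quartic} that estimates $v_0$ on each type $A,B,C,D$.
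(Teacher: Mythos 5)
Your proposal takes essentially the same route as the paper: the paper gives no independent proof of Theorem~\ref{thm: C-triplets}, but derives it directly from \cite[Thm.~1.7, Prop.~1.4, \S\S2--3]{IsNa} (the classification of basic triplets, the exhaustive list obtained via separation, and the fact that $\varphi_{|H|}$ realizes each such triplet as the minimal desingularization of a normal quartic with irrational singularities), together with Proposition~\ref{prop:Umezu}(b) for the existence of the anticanonical $\pi$-exceptional divisor $E$ in the converse direction. Your additional sketch of the converse is consistent with this, with the one caveat that the nefness of $K_S+H=H-E$ is not literally immediate from ``$E$ is $\pi$-exceptional'' (on exceptional curves it uses minimality of the resolution, $K_S\cdot C\geqslant 0$, and on the remaining curves it is part of what \cite{IsNa} verifies), but this does not change the substance of the argument.
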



Keeping notation as in \S\;\ref{ssec:strategy}, let $C$ be an irreducible curve on $\Sigma$ cut out by a smooth sextic surface
$X$ in $\proj^3$,  and let $\Gamma$ be its proper transform on $S$. Then $\Gamma\sim 6H-D$, where $D$ is an effective
$\pi$-exceptional divisor on $S$. We write $D=D_E+D'$, where $D_E$
is supported on ${\rm Supp}(E)$ and is contracted to ${\rm
Irrat}(\Sigma)\cap C$, whereas $D'$ is contracted to the Du Val
singularities of $\Sigma$ situated on $C$. From $C\sim 6H$ and
$E\cdot H=E\cdot D'=0$ we deduce
\begin{equation}\label{eq:DE}
-\kappa = - K_S\cdot\Gamma = E\cdot (6H-D)=-E\cdot D=-E\cdot D_E,
\end{equation}i.e the presence of Du Val singularities does not
affect $\kappa$ (this will be used in all cases discussed below). Thus, \eqref{eq:CZquartic} reads
$$v_0=\max\{g,g-\kappa-1\}=\max\{g,g-E\cdot
D_E-1\}\le 39\,.$$ Since $g\in\{14,15\}$, (\ref{eq:CZquartic})
follows once
\begin{equation}\label{eq:CZquartic-bis} -E\cdot
D_E\le 25\,.\end{equation} We will check inequality
(\ref{eq:CZquartic-bis}) case by case.

\subsubsection{The normal cone case} This is case (iii) in
Proposition \ref {prop:IsNa}, i.e., $\Sigma$ is the cone with
vertex $\mathfrak v$ over a smooth quartic $Y \subset \PP^2$. Then
$S = \PP(\Oc_Y \oplus \omega_Y)$. If $E_0$ is the section
contracted by $\pi$ to $\mathfrak v$, then $E_0^ 2=-4$,
$E=-K_S=2E_0$, $H\cdot E_0=0$, and $D_E= E_0$ (remember the argument in \S \ref {ssec:Case (ii,a)}, ($\ddag$)). Thus $-E\cdot
D_E= -2E_0^2=8$ and so \eqref{eq:CZquartic-bis}  holds.

%

\medskip

Next we examine case (i) in Proposition \ref {prop:IsNa} (see
\cite {IsNa} for the cases considered below).

\subsubsection{Normal rational quartics:  Case (a)}\label {ssec:casea}
This case is described in \cite [\S~2.2.1]  {IsNa}.

Let $X$ be a weak (or generalized) del Pezzo surface of degree
$2$, i.e., $-K_X$ is nef and big and $K_X^2=2$, see e.g.,
\cite{Dol}.  Then ${\rm Bs}(|-K_X|) = \emptyset$, $\dim (\vert
-K_X\vert )=2$,
 and $\phi_{|-K_X|}: X\to \PP^ 2$ is generically finite, of degree $2$
(see\;\cite[p.\;944]{IsNa}). A general member $B \in |-2K_X|$ is a
smooth, non--hyperelliptic curve of genus $3$ (see \cite[Lemma
2.1]{IsNa}).
If $G\in |-K_X|$,
then $B\cdot G=4$. Thus the triplet $(X,B,G)$ satisfies
condition $\mathcal{C}_4$ and we can consider its  separation $(S,H,E)$ as in \eqref{eq: separation}, 
which is a basic triplet. 
One has  $H^ 2=4$ and $- K_S \equiv  E$, with $E^2= -2$. Note that
$| \rho^*(G)|$ has dimension 2 and is base point free.
Furthermore, $H\cdot  \rho^*(G)=4$.

 According to Theorem \ref{thm: C-triplets},  $S$ is the
minimal desingularization of the normal quartic surface
$\Sigma:=\phi_{|H|}(S)  \subset \PP^3$, and $ \pi=\phi_{|H|}$
contracts $E$ (and no other curve) to a unique irrational singular
point $p \in \Sigma$, with $p_g(p) = 1$ (see Proposition
\ref{prop:Umezu}-($ii$-1)).

%
Let $C \sim 6 H$ be an irreducible curve  on $\Sigma$ of geometric
genus $g$, and let $\Gamma$ be the proper transform of $C$ on $S$.
We have $\Gamma \equiv 6H - D$, with $D = D_E + D'$ as above. 
From \eqref {eq: separation}, $-\kappa=E\cdot \Gamma = (\rho^*(G)-\Delta)\cdot
\Gamma$. Since $\Gamma$ is
irreducible and non-rational, we have $\Delta\cdot \Gamma\geqslant
0$. Hence $E\cdot \Gamma \leqslant (\rho^*(G)-\Delta)\cdot \Gamma\leqslant
\rho^*(G)\cdot \Gamma= \rho^*(G)\cdot (6H - D)$. Since $\rho^*(G)$
is nef, one has $\rho^*(G)\cdot D\geqslant 0$. Thus
$-\kappa=E\cdot \Gamma = \rho^*(G)\cdot (6H - D) \leqslant
\rho^*(G)\cdot (6H) =24$. This proves \eqref {eq:CZquartic-bis}.



\begin{rem} An equivalent description of $\Sigma$ is gotten by taking
the image of $\PP^ 2$ via the rational map determined by a linear system
of curves of degree 6 with 7 double and 4 simple base points all on
a cubic.\end{rem}

\subsubsection{Normal rational quartics:  Case (b)}\label {ssec:caseb}
This case is described in \cite[\S\;2.2.2]{IsNa}.

 Let  $Z$ be a  weak
del Pezzo surface of degree $1$. Then $|-K_Z|$ is a pencil, and
 ${\rm Bs}(|-K_Z|)$ consists of  a single point  $b$ (cf.\;\cite[p.\;944]{IsNa}).  If $L\in |-K_Z|$,
 then $b$ is a smooth point of $L$. Let $L'$ be the irreducible component
 of $L$ containing $b$.
 One can choose a point $q \in L'$ such that:\\
\begin{inparaenum}
\item[(1)] $q$ is a smooth point for $L$, and \\
\item[(2)] $\cO_{L}(b-q)$, $\cO_{L}(2b-2q)$ are not isomorphic to $\cO_{L}$.\\
\end{inparaenum}
Then there exists a unique point $q_1 \in L'$ satisfying
$\cO_{L}(q_1) \cong \cO_{L}(3b-2q)$  and $q_1 \neq b$ by condition
(2).

Let $f :X \to Z$ be the blowup of $Z$ at $q$ with exceptional
divisor $\Xi$,  and let $G$ be the proper transform of $L$ on $X$.
Let $b' = f^{-1}(b)$ and $\{q'\} = G \cap \Xi$. The points $b'$
and $q'$ are contained in the proper transform $G'$ of $L'$. There
is a smooth point $q_1' \in G$ such that $\cO_{G}(3b'-2q') \cong
\cO_{G}(q_1')$. Then $f(q_1') = q_1$, $\{q_1'\} = {\rm Bs}(|3G +
\Xi|)$ and $h^0(X, \cO_X(3G + \Xi)) = 4$.

A general member $B\in |3G + \Xi|$ is a smooth, non-hyperelliptic
curve of genus $3$, and $(X,B,G)$ is a  triplet satisfying
condition $\mathcal{C}_1$, see \cite[Lemma 2.2]{IsNa}. The
separation of $B$ and $G$ consists in blowing--up $\rho : S \to X$
at $q_1'$ with exceptional divisor $\Delta$. Letting $H$, $E$, and
$\Xi'$ be the proper transforms of $B$, $G$ and $\Xi$,
respectively, we get a basic triplet $(S,H,E)$ with
$H\cdot E=0$, $H^ 2=4$, $K_S=-E$, and $E^ 2=-1$.

We let $\Lambda$ be the total transform of $L$ on $S$. Then
$\Lambda$ is nef, $|\Lambda|$ is a pencil with $\Lambda^2=1$, with
a single base point, $\Lambda\cdot H=4$, and
$E=\Lambda-\Xi'-\Delta$, where $\Lambda\cdot \Xi'=\Lambda\cdot
\Delta=0$.

One has ${\rm Bs}(|H|) = \emptyset$, and $\pi=\phi_{|H|}: S \to
\PP^3$ is the minimal desingularization of the quartic
$\Sigma=\pi(S)$, which contracts $E$ (and only this curve) to an
irrational singular point $p\in {\rm Irrat}(\Sigma)$
 with $p_g(p)=1$.

For an irreducible curve $C \sim 6 H$ on $\Sigma$ of geometric
genus $g\in\{14,15\}$, we let as before $\Gamma$ be the proper
transform of $C$ on $S$.
 Then $\Gamma \equiv 6H - D$, with $D= D_E + D'$ (cf. \eqref{eq:DE}).  Since $\Gamma$ is
non--rational, we have $\Xi'\cdot\Gamma\geqslant 0$ and
$\Delta\cdot\Gamma\geqslant 0$. Furthermore, $\Lambda\cdot
D\geqslant 0$ since $\Lambda$ is nef. Hence
$$-\kappa=E\cdot\Gamma=(\Lambda-\Xi'-\Delta)\cdot\Gamma\leqslant\Lambda\cdot\Gamma
=\Lambda\cdot (6H - D)\leqslant 6\Lambda\cdot H=24\,.$$ Thus again
\eqref {eq:CZquartic-bis} is satisfied.

%
%
%
%

\begin{rem}
The quartic $\Sigma\subset\proj^3$ in this case is the image of
$\PP^ 2$ under the rational map determined by a linear system of
curves of degree 9 with 8 triple and one double base points, all
on a cubic.
\end{rem}

\subsubsection{Normal rational quartics:  Case (c)} This case is described in \cite[\S\;2.2.3]{IsNa}.

Consider the Hirzebruch surface $\mathbb{F}_1=\PP(\cO_{\PP^1}
\oplus \cO_{\PP^1}(1))$.
 Let $\Xi$ be the  section with $\Xi^ 2=-1$ and $F$ a ruling. Fix a point $x_0\in \Xi$,
 and let $F_0$ be the ruling containing $x_0$.
There exists a  reduced divisor   $\Delta \in |2 \Xi + 6 F|$ such that: \\
\begin{inparaenum}
\item[(1)] if $f: V \to \mathbb{F}_1$ is the double covering branched  along $\Delta$,
then $V$ has only Du Val singularities; \\
\item[(2)] $\Xi \;\; |\!\!\!\!\! \subset \; \Delta$; \\
\item[(3)] $x_0 \in \Delta \cap \Xi$ and ${\rm mult}_{x_0}(\Delta|_{\Xi}) = 1$; \\
\item[(4)] $F_0 \;\; |\!\!\!\!\! \subset \; \Delta$ and $F_0 \cap \Delta = \{x_0\}$.
\end{inparaenum}

Fixing such a $\Delta$, we let $\lambda :Y \to V$
be the minimal desingularization of the double covering $V$ as in
(1). The surface $Y$ is rational, because it carries the pencil of
rational curves $|\widetilde\lambda^* (F)|$, where
$\widetilde\lambda=f\circ \lambda\colon Y\to\mathbb{F}_1$.  One
has $K_Y \sim \widetilde\lambda^*(K_{\mathbb{F}_1} + \Xi + 3F)
\sim \widetilde\lambda^*(- \Xi)$. Letting $G :=
\widetilde\lambda^*(\Xi)$, by (2) and (3) above, there exists an
irreducible component $G' \subset G$ such that the induced
morphism $G' \to \Xi$ is a double covering, whereas the other
components of $G$ are contracted by $\widetilde\lambda$ to points
of $\Xi$. These components are also contracted to singular points
of $V$, hence by (1), they are rational curves with
self-intersection $-2$.

The morphism $\widetilde\lambda$ is finite over an open
neighborhood of $x_0$ and $\widetilde\lambda^{-1}(x_0)$ consist of
a single point $b' \in G'$. One has
$\widetilde\lambda^*(F_0) = F_1 + F_2$, where $F_1,F_2$ are
$(-1)$--curves  such that $F_1\cdot F_2=1$ and $ F_1 \cap
F_2=\{b'\} $.

Let $\mu :Y \to S$ be the blowdown of $F_1$. Letting
$$E:= \mu_*(G) \sim - K_S, \; \; L := \mu_*(F_2), \;\;{\rm and} \;\; b := \mu(b'),$$
we get  $\mu^*(E) = G + F_1$ and $\widetilde\lambda^*(F) \sim
\mu^*(L)$. By \cite[Lemma\;2.5]{IsNa}, $|L+2E|$ is base point free
and its general member $H$ is a smooth, non--hyperelliptic curve
of genus $3$, with $H^ 2=4$. Then  $\pi=\phi_{|H|} $ maps $S$ to a
normal quartic $\Sigma \subset \PP^3$ and  contracts $E$ (and only
$E$) to a unique irrational singular point $p\in {\rm
Irrat}(\Sigma)$ with $p_g(p)=1$.

Note that $E$ contains a component   $E':=\mu_*(G')$. The other
(possible) components of $E$ are rational curves with
self--intersection $-2$ and so, these have zero intersection with
$K_S\equiv -E$.  Hence $-1=E^ 2=E\cdot E'$. Since $L=\mu_*(F_2)$,
where $L^2=0$, the linear system
$|L|$ is a base point free pencil of rational curves. One has
$L\cdot E=L\cdot E'=2$ (i.e., $L$ has zero intersection with the
components of $E$ different from $E'$),
 and so $L \cdot H=4$.

For $C \sim 6 H$ on $\Sigma$ of geometric genus $g$, we have
$\Gamma \equiv 6H - D$, with $D = D_E +D'$. Let $\alpha$ be
the multiplicity of $E'$ in $D_E$. Then, from \eqref{eq:DE}, $\kappa=E\cdot D_E=\alpha E\cdot E'=-\alpha$. On the other hand,
$2\leqslant L\cdot \Gamma= 24-2\alpha$, thus
$-\kappa=\alpha\leqslant 11$ and so, \eqref {eq:CZquartic-bis}
holds.
%

\subsubsection{Quartic monoids}
This case corresponds to quartic surfaces with a triple point (see
\cite[\S\;2.4]{IsNa}).

Explicitly, let $B$ be a smooth quartic in $\proj^2$ and let $G$
be a cubic in $\PP^2$, not necessarily reduced or irreducible.
Performing the separation of $(\PP^ 2,B,G)$,  we have a basic triplet $(S,H,E)$ with 
$E^2 = -3$ and $K_S\sim-E$. The morphism $\pi=\phi_{|H|}$ sends
 $S$ to a normal quartic $\Sigma \subset \PP^3$, contracting
$E$ to the only irrational (triple) point $p$ of $\Sigma$, with
$p_g(p)=1$. 

Letting $\Lambda=H-E\sim H+K_S$, we obtain a base point free
linear system $|\Lambda|$ of dimension 2, with $\Lambda^2=1$ and
$p_a(\Lambda)=0$. The morphism $\phi_{|\Lambda|}: S\to \PP^ 2$
factors through the (stereographic) projection
$\Sigma\to\proj^2$ with center $p$. In fact, this morphism is
nothing but the  above separation.

For $C \sim 6H$ on $\Sigma$ of geometric genus $g$ we have, as
usual,
$\Gamma \equiv 6H - D$, with $D = D_E + D'$. Since $|\Lambda|$ cuts out on $\Gamma$ a linear series of
dimension 2, we have $4\leqslant \Lambda\cdot \Gamma= (H-E)\cdot
(6H-D)=24+E\cdot D_E$. Hence, from \eqref{eq:DE}, $-\kappa =-E\cdot D_E\leqslant 20$ so \eqref {eq:CZquartic-bis} holds.
\bigskip

Next we turn to case (ii) in Proposition \ref {prop:IsNa}. In
\cite[\S~2.3]{IsNa} (cf.\;also\;\cite{Um2}) there is a
classification, which we will go through. From Proposition
\ref{prop:Umezu}-($ii$), the cardinality of ${\rm Irrat} (\Sigma)$
can be either $1$ or $2$. \medskip

\subsubsection{Ruled elliptic normal quartics:
Case (a)}\label{ssec:rcasea} This  case is described in
\cite[\S\;2.3.2]{IsNa}.

Let $Y$ be a smooth, elliptic curve, and let $q_1, q_2\in Y$ be
such that $2q_1 \;\; |\!\!\!\!\!\sim \;2q_2$. Letting $\mathcal E
= \cO_Y(q_1) \oplus  \cO_Y(q_2)$ and $X = \PP(\mathcal E)$, we
consider the structure morphism $\sigma :X \to Y$ and the fibres
$F_i := \sigma^*(q_i)$, $ i =1, 2$. Let further  $H_{\cE}$ be the
tautological divisor class on $X$
 and $F$ the numerical class of a fibre.

The surface $X$ possesses two sections $\Xi_i \sim H_{\cE} -F_i$,
for $ i =1, 2$. One has $\Xi_1 \cap \Xi_2 = \emptyset$ and $|-K_X|
= \{G\}$, where $G:= \Xi_1 + \Xi_2$.  A general member $B\in
|H_{\cE} - K_X|$ is a smooth, non--hyperelliptic curve of genus
$3$ which intersects transversally $\Xi_i$ at a point $x_i$, $ i
=1, 2$.

Let $\rho: S \to X$ be the blowup of $X$ at the points $x_i$
with exceptional divisors $\Delta_i $, for $ i =1, 2$. Consider
the proper transform $\Xi'_i$ of $\Xi_i$ for $ i =1, 2$, the
divisor $E=\Xi_1' + \Xi_2' \equiv -K_S$, and the proper transform
$H$ of $B$ on $S$.  Then $K_S^2  = -2$,   $H^ 2=4$, $\Xi_i'^2 =
-1$, and $H\cdot \Xi_i'= 0$, for $ i =1, 2$. We abuse notation and
denote by $F$ the total transform on $S$ of a ruling of $X$. One
has  $H\cdot F=3$.

Thus we got a separation $(S,H,E)$ of $B$ and $G$. The linear
system $|H|$ on $S$ is base point free of dimension 3, and
$\pi=\phi_{|H|}$ maps $S$ to a quartic surface $\Sigma$ in
$\proj^3$ with ${\rm Irrat}(\Sigma) = \{p_1, p_2\}$, where $p_i =
\pi (\Xi'_i)$ is a simple elliptic singularity with $p_g(p_i) =1$,
for $i=1, 2$.  Since $H\cdot F=3$, $\Sigma$ is swept out by an
elliptic pencil of rational normal cubics.

Take $C \sim 6H$ on $\Sigma$ of geometric genus $g$.
Then
$\Gamma \equiv 6H - D$, with $D_E=\alpha_1 \Xi'_1 + \alpha_2 \Xi'_2$, $\alpha_1,\alpha_2$ 
non--negative integers.
From \eqref{eq:DE}, one has $-\kappa = \alpha_1 + \alpha_2$. On the
other hand, $\Gamma\cdot F=18- (\alpha_1+\alpha_2)\geqslant 2$,
because $g>1$. Hence $-\kappa\leqslant 16$ and so, \eqref{eq:CZquartic-bis} holds.


\subsubsection{Ruled elliptic normal quartics: Case (b)}
This  case is contained in \cite[\S\;2.3.1]{IsNa}, to which we
refer for  details.

Let $Y$ be a smooth, elliptic curve with a line bundle $A$ of
degree $2$.  Letting  $\cE=\cO_Y \oplus A$ we consider the
elliptic ruled surface $X = \PP(\cO_Y \oplus A)$ with  the
structure morphism $\sigma: X \to Y$. We let $\Xi_1$ denote the
unique section of $\sigma$ with $\Xi_1^ 2=-2$, $H_{\cE}$ the
tautological line bundle, and $F$ the numerical class of a ruling.
One has $h^ 0(X,\cO_X(H_{\cE}))=3$ and $H_{\cE}^ 2=2$,
$H_{\cE}\cdot \Xi_1=0$. Furthermore $-K_X\equiv H_{\cE}+\Xi_1$.
Hence $|-K_X|$ has $\Xi_1$ as a fixed component and $|H_{\cE}|$ as
its movable part.  If $G\in |-K_X|$, then $h^0(G,\cO_G) = 2$.
Letting $G=\Xi_1+\Xi_2$, we note that  either $\Xi_1\cap
\Xi_2=\emptyset$, or $\Xi_2$ consists of $\Xi_1$ plus the sum of
two fibres with class in $|\sigma^*(A)|$.

A general member $B\in |-K_X + \sigma^*(A)|$ is a smooth,
non--hyperelliptic curve of genus $3$ with $B^ 2=8$ and $B \cdot G
=4$. Note that $B\cdot \Xi_1=0$, so $B\cap \Xi_1=\emptyset$. Thus
$(X,B,G)$ is a $\mathcal{C}_4$-triplet.

Performing a separation of $(X,B,G)$,  we obtain a
 basic triplet $(S,H,E)$ with $H$ and $E\sim -K_S$ being
the proper transforms of $B$ and $G$, respectively. We  let $E_i$
denote the proper transforms of $\Xi_i$, for $i=1, 2$. One has
$K_S^ 2=-4$, $H^ 2=4$, $H\cdot E_i=0$, and ${E_i}^ 2=-2$, for
$i=1, 2$. We abuse notation and let $F$ still denote the total
transform of a ruling $F$ on $S$. One has $H\cdot F=2$.

The linear system $|H|$ is base point free of dimension 3, and
$\pi=\phi_{|H|}$ maps $S$ to a normal quartic $\Sigma \subset
\PP^3$. Since $H\cdot F=2$, the surface $\Sigma$ is swept out by
an elliptic pencil of conics  $|F|$. Furthermore, $E=E_1+E_2$ is
contracted to one or two irrational singular points.
More precisely,
 if $\Xi_1\cap \Xi_2=\emptyset$ and so
$E_1\cap E_2=\emptyset$, then $p_i=\pi(E_i)$, $i=1,2$, are two
distinct simple elliptic singularities of $\Sigma$ with
$p_g(p_i)=1$. Otherwise $E_2=E_1+F_1+F_2$, where $F_1,F_2$ are two
(may be coinciding) $(-2)$--curves obtained as a result of two
blowups on each of two (may be coinciding) fibres in
$|\sigma^*(A)|$. In this case $\Sigma$ has a unique irrational
singular point $p$ with $p_g(p) =2$ (see Proposition
\ref{prop:Umezu}-($ii$-2)).

Take a curve $C \sim 6H$ on $\Sigma$ of geometric genus $g$.
According to the cardinality of ${\rm Irrat}(\Sigma)$,
we consider the following cases. \\
\begin{inparaenum}
\item[(1)] If ${\rm Irrat}(\Sigma)=\{p_1,p_2\}$ with $p_1\neq p_2$,
then $\Gamma \equiv 6H - D$ and  $D_E=\alpha_1 E_1 + \alpha_2 E_2$
with $\alpha_i$ non--negative integers.
One has $-\kappa=2(\alpha_1+\alpha_2)$.  On the other hand $F\cdot \Gamma=12-(\alpha_1+\alpha_2)
\geqslant 2$ since $g>1$ thus, from \eqref{eq:DE}, $-\kappa\leqslant 20$.\\
\item[(2)] If ${\rm Irrat}(\Sigma)=\{p\}$,
then $\Gamma \equiv 6H - D$, with $D_E= \alpha
E_1+\beta_1F_1+\beta_2F_2$ for some non-negative integers
$\alpha, \beta_1,\beta_2$. One has $K_S\cdot F_1=K_S\cdot F_2=0$,
thus $-\kappa  =  2\alpha$. As before, $F\cdot \Gamma=12-\alpha
\geqslant 2$, hence again $-\kappa\leqslant 20$.
\end{inparaenum}

In any case, \eqref{eq:CZquartic-bis} holds.

\subsubsection{Ruled elliptic normal quartics: Case (c)}
This  case is treated in \cite[\S\;2.3.2, Case\;C2-2]{IsNa}.

Let $Y$ be a smooth, irreducible elliptic curve, and let $q \in
Y$.
Taking $0 \neq \xi \in {\rm Ext}^1(\cO_Y(q), \cO_Y(q))$ we
consider the corresponding rank two vector bundle $\cE:= \cE_{\xi}
$ on $Y$.
We let $X := \PP(\mathcal E)$
 and  $F_{q} := \sigma^*(q)$, where
$\sigma :X \to Y$ is the structure morphism. Let $H_{\cE}$ be the
tautological divisor class on $X$ and $F$ the numerical class of a
fibre.

On $X$ we consider a section $\Xi_0 \sim H_{\cE} - F_q$
corresponding to  $\cE\to\!\!\to \cO_Y(q)$, so that $\Xi_0^2 = 0$.
Notice that $|\Xi_0| = \{\Xi_0\}$, as it follows from $h^0(Y,
\cE(-q))=1$.  One has $|-K_X| = \{G\}$, where $G:= 2 \Xi_0$.

Since $|H_{\cE} - K_X| = |3 \Xi_0 + F_q|$, one finds that ${\rm
Bs}(|3 \Xi_0 + F_q|)$ consists of the single point $b := \Xi_0
\cap F_q$. The general member $B \in |3 \Xi_0 + F_q|$ is a smooth,
non--hyperelliptic curve of genus $3$ with $B \cdot \Xi_0 = 1$.
Thus $(X,B,G)$ is a $\mathcal{C}_2$-triplet. The separation
of $B$ and $G$ proceeds
 in two steps as follows.

On the first step, we let $\rho_1 : X_1 \to X$ be the blowup
at $b$ with exceptional divisor $\Delta_1$, and let $G_1 =
\rho_1^*(G) - \Delta_1$, $B_1= \rho_1^*(B) - \Delta_1$, $\Xi_0' =
\rho_1^*(\Xi_0) - \Delta_1$, and $F_q' = \rho_1^*(F_q) -
\Delta_1$. Since $B_1 \cdot G_1 =1$, we get a 
$\mathcal{C}_1$-triplet $(X_1,B_1,G_1)$ and a smooth point $b_1$
on $G_1$ with $\cO_{G_1}(B_1) \cong \cO_{G_1}(b_1)$. One has $b_1
\in \Delta_1$, because $G_1 = 2 \Xi_0' + \Delta_1$ and $B_1 \cdot
\Xi_0' = 0$, and furthermore, $b_1 \in {\rm Bs}(|B_1|)$ and  $b_1
\; |\!\!\!\!\!\in \; F'_q$.

On the second step,  we consider the blowup $\rho_2: S \to
X_1$  of $X_1$ at $b_1$ with exceptional divisor $\Delta_2$. In
this way, we arrive at a basic triplet $(S,H,E)$, where  $$H=B_1^*-\Delta_2= 3 \Xi_0'' + F_q'' +
3 \Delta_1' + 2 \Delta_2\,\,\,\mbox{and}\,\,\, E=G_1^*-\Delta_2= 2
\Xi_0'' + \Delta_1'\sim - K_S\,.$$ Here $\Xi_0''$, $F_q''$, and
$\Delta_1'$ are the proper transforms on $S$ of $\Xi'_0$, $F_q'$,
and $\Delta_1$, respectively. One has ${\Xi'}^ 2=-1, {\Delta'_1}^
2=-2$, and $\Xi'\cdot \Delta'_1=1$. Abusing notation, we still
denote by $F$ be total transform on $S$ of a ruling of $X$.

 The linear system $|H|$ is base point free of dimension 3,
with $H^ 2=4$ and $H\cdot F=3$. The associated morphism
$\pi=\phi_{|H|}$ sends $S$ to a quartic surface $\Sigma$ in
$\proj^3$ with a unique irrational singular point $p=\pi(E)$. It
is swept out by an elliptic pencil $|F|$ of rational normal
cubics. One has $E^ 2=-2$ and $p_g(p) = 2$.

For $C \sim 6H$
we have $\Gamma \equiv 6H - D$, with $D_E=\alpha \Xi'+\beta
\Delta'_1$ for some non-negative integers $\alpha$ and $\beta$.
Thus, from \eqref{eq:DE}, $-\kappa =  \alpha$. Since $\Gamma\cdot
F= 18-\alpha\geqslant 2$, then $-\kappa \leqslant 16$. Thus
again \eqref {eq:CZquartic-bis} holds.

\bigskip

This ends the discussion of the normal cases. We turn next to the
non--normal cases (see Proposition \ref
{prop:Umezu2}).

\subsubsection{Non--normal genus 2 scrolls}  In this case  $\Sigma$ is a cone
over an irreducible plane quartic $Y$ with a node or a cusp
(see \cite[Prop.\;2.6]{Ur}). The  one--dimensional singular locus
of $\Sigma$ is a double line $\ell$ passing through the vertex
$\mathfrak v$ of $\Sigma$.

The minimal desingularization of $\Sigma$ is the surface $S =
\PP(\cO_G \oplus L)$, where $G$ is the normalization of $Y$ and
 $L \in {\rm Pic}^4(G)$, $L\not\cong \omega^ {\otimes 2}_Y$. The
morphism $\pi : S \to \Sigma \subset \PP^3$ is determined by the
tautological line bundle $\cO_S(1)$. Letting $H$ and $E$,
be the sections corresponding to $\cO_G \oplus L
\to\!\!\!\to L$ and to $\cO_G \oplus L \to\!\!\!\to \cO_G$,
respectively, we obtain  $H^2 = - E^2 = 4$, $\cO_S(1) =
\cO_S(H)$, $H \equiv E + 4 F$, and $K_S \equiv - 2 E - 2 F \equiv
- 2 H + 6 F$, where $F$ stands, as usual, for the class of a
ruling of $S$.

Let $C \sim 6 H$ be an irreducible curve of geometric genus $g \in \{14,\;15\}$ on
$\Sigma$, and $\Gamma$ its proper transform on $S$.
Since $C$ is cut out on $\Sigma$ by a smooth surface of degree 6,
we have $\Gamma \equiv 6H - \alpha E$, where $\alpha\in\{0,1\}$.
Thus $-\kappa = -K_S \cdot \Gamma = 12+6\alpha\le 18$, and so
 \eqref {eq:CZquartic-bis} holds.
%

\medskip

Next we consider the non--normal elliptic scrolls, see Proposition
\ref {prop:Umezu2}.  There are two  types of such scrolls
described  in \cite[\S\;1]{Ur} as cases (II-1) and (II-2).

\subsubsection{Non--normal elliptic scrolls: case (a)} \label{ssec:nnorm(a)}
Take a smooth, irreducible elliptic curve $G$. Let $N \in {\rm
Pic}^0(G)$ be non-trivial, and let $M \in {\rm Pic}^2(G)$.
Consider the ruled surface  $S := \PP(\cO_G \oplus N)$ together
with the structure morphism $\sigma: S \to G$. Let $D_1$ and $D_2$
be the  sections associated
 with
$\cO_G \oplus N \to\!\!\to \cO_G$ and $\cO_G \oplus N \to\!\!\to
N$, respectively, and let $F$ be the ruling class.  Then $K_S
\equiv - 2 D_1$, and  the line bundle $H:= \cO_S(D_1) \otimes
\sigma^*(M)$ induces a finite birational morphism $\pi=\phi_{|H|}
: S \to \PP^3$ onto an irreducible quartic surface $\Sigma$.

The images $\ell_1= \pi(D_1)$ and $\ell_2= \pi(D_2)$ are skew
double lines of $\Sigma$, and ${\rm Sing}(\Sigma) = \ell_1 \cup
\ell_2$.   The image under $\pi$ of any fibre of $S$ is a line
meeting both $\ell_1$ and $\ell_2$. The general plane section of
$\Sigma$ has two nodes at the intersection points with $\ell_1$
and $\ell_2$,
 and its normalizations is $G$.

Let $C \sim 6 H$ be  an irreducible curve  of geometric genus $g$
on $\Sigma$. Then $\Gamma \sim 6 H\equiv 6 D_1 + 12 F$, and so,
$-\kappa = -K_S \cdot \Gamma = 24$,
proving again \eqref {eq:CZquartic-bis}.

\begin{rem} The construction of this scroll is classical.  Once  skew lines
$\ell_1, \ell_2$ in $\PP^ 3$  have been fixed, take a smooth,
irreducible, elliptic curve $G$  and two degree two maps $f_i:
G\to \ell_i$, $ i=1, 2$. For each $x\in G$, let $\ell_x=\langle
f_1(x), f_2(x)\rangle$. Then  $\Sigma=\cup_{x\in G}\ell_x$.
\end{rem}

\subsubsection{Non--normal elliptic scrolls: case (b)}  Let $G$
be  a smooth, irreducible elliptic curve, and let $M \in {\rm
Pic}^2(G)$ be a line bundle on $G$.  Let $\cE := \cE_{\xi}$ be the
rank--two vector bundle on $G$ fitting in the non-split sequence
$$0 \to \cO_G \to \cE \to \cO_G \to 0$$
associated to the choice of a  non--zero $\xi \in {\rm
Ext^1}(\cO_G, \cO_G)$. Let $S := \PP(\cE)$, together with the
structure morphism $\sigma: S \to G$ and the fibre class  $F$. Let
$D_1$ be the section corresponding to  $\cE \to\!\!\to \cO_G$, and
let $H := \cO_S(D_1) \otimes \sigma^*(M)$. As in \S~\ref
{ssec:nnorm(a)}, $H$ defines a finite morphism $\pi=\phi_{|H|} : S
\to \PP^3$  onto  an irreducible quartic surface $\Sigma$. Then
$\pi (D_1) $ is a double line $\ell$ of $\Sigma$, and any fibre of
$S$ is sent via $\pi$ to a line of $\Sigma$ crossing $\ell$. One
has ${\rm Sing}(\Sigma) =\ell$.  The general plane section $H$ of
$\Sigma$ has an $A_3$-singularity at $H\cap \ell$, and its
normalizations is $G$. Since $\Gamma \sim 6 H \equiv 6 D_1 + 12 F$ and $K_S \equiv - 2
D_1$, the computations go  as in \S~\ref {ssec:nnorm(a)}
proving \eqref {eq:CZquartic-bis}.

\smallskip

According to Proposition \ref {prop:Umezu2}, we are left with the
rational case. This gives rise to three items (see  (III-A),
(III-B), (III-C) in \cite[\S\;1]{Ur}).

\subsubsection{Rational non--normal quartics: case (a),
the Segre surface}  The {\em Segre surface} $\Sigma \subset \PP^3$
is the image of a normal surface $\widehat{\Sigma} \subset \PP^4$
of degree $4$ with at most Du Val singularities under the linear
projection $\Pi_{p}: \PP^4 \dasharrow \PP^3$ with  center
$p\not\in \widehat{\Sigma}$.  The surface $\widehat{\Sigma}$ is
the anticanonical image of a  weak
del Pezzo surface of degree $4$, i.e., the blowup of $\PP^2$ at
$5$ points,
see \cite {Seg}.

Apart from its one--dimensional singular locus $\Lambda$, which is
in general a double conic (see Remark in \cite[p. 277]{Ur} for
details), $\Sigma$ can have further isolated Du Val singularities
off $\Lambda$.
We have $-\kappa = -K_S \cdot \Gamma=  6 K_S^2 =  24$,
proving \eqref{eq:CZquartic-bis}.

\subsubsection{Rational non--normal quartics: case (b)}
In this case $\Sigma$ has a singular line $\ell$, and its general
plane section has geometric genus 2, hence it is either nodal or
cuspidal.  Besides,  $\Sigma$ may have isolated Du Val
singularities off $\ell$
(cf. \cite[Case\;(III-C),\;p. 269]{Ur}).

The surface $S$ is obtained  by successively blowing--up $\PP^2$
at $9$ points. The morphism $\pi:S \to \Sigma\subset  \PP^3$ is
defined by $H \sim 4L - 2 E_1 - \sum_{i=2}^9 E_i$, where $L$ is
the proper transform of a line in $\PP^2$ and $E_i$, for
$i=1,\ldots,9$,
are the (total) exceptional divisors of the blowups. In particular, $K_S
= - 3   L  + \sum_{i=1}^9 E_i$, and $h^ 0(S, \mathcal
O_S(-K_S))=1$, i.e., there is only one cubic curve  on $\proj^2$
 passing through the 9 blown--up points, corresponding to a
unique  effective  anticanonical divisor $D$ on $S$. Note that
$\pi(D)=\ell$ and that $\Sigma$ is swept out by a pencil of conics
cut out by the planes containing $\ell$, with the pullback
$\Lambda\sim L-E_1$  on $S$.

The surface $\Sigma$ can have further singularities along $\ell$
described in  \cite[Case\;(III-C),\;p. 269]{Ur}.  Consider the
morphisms $S \stackrel{\rho}{\to}\widehat{\Sigma}
\stackrel{\nu}{\to} \Sigma$, where $\nu$ is the normalization,
$\rho$ is  the minimal resolution of singularities of
$\widehat{\Sigma}$, and $\pi=\nu\circ \rho$.   Then any singular
point of $\widehat{\Sigma}$ which is not Du Val lies on $\nu^
{-1}(\ell)$ and  is a rational triple point. The number of such
triple points is at most $2$, their images on $\Sigma$ are also
triple points for $\Sigma$ (which in this case is a monoid). If
$\Delta$ is the fundamental cycle of such a triple point, then
$\Delta<D$. Moreover  $\Lambda\cdot \Delta=1$. Let $A$ be an
irreducible component of $\Delta$. Since $H=\Lambda-K_S$ and
$H\cdot A=0$, we have $K_S\cdot A=\Lambda\cdot A\in \{0,1\}$.
Since $A\cong \PP^ 1$, then $A^ 2\in \{-2,-3\}$.

For two possible rational triple points of $\Sigma$, we let
$\Delta_1$, $\Delta_2$ denote their fundamental cycles on $S$. If
$C \sim 6 H$ is an  irreducible curve of geometric genus $g$ on
$\Sigma$ with the proper transform $\Gamma$ on $S$, then $\Gamma
\equiv 6 H - \Delta'_1 -  \Delta'_2 $, where the support of
$\Delta'_i$ is contained in the support of $\Delta_i$, for $i=1,
2$. If $A_i$ is the unique component of $\Delta_i$ such that
$\Lambda\cdot A_i=1$, we let $\alpha_i$ be the multiplicity of
$A_i$ in $\Delta'_i$, for $ i=1, 2$. Thus, $-\kappa = -K_S \cdot
\Gamma = 12 + K_S\cdot (\Delta'_1 +
\Delta'_2)=12+(\alpha_1+\alpha_2)$. On the other hand, since
$\Gamma$ has genus $g\geqslant 14$, we have $2\leqslant
\Gamma\cdot \Lambda=12-(\alpha_1+\alpha_2)$. Hence
$-\kappa\leqslant 22$, and so,
%
\eqref{eq:CZquartic-bis} holds.

\subsubsection{Rational non--normal quartics: case (c)}
In this case $\Sigma$ is the image of a smooth surface $S \subset
\PP^5$ via the linear projection $\Pi_{\ell}: \PP^5 \dasharrow
\PP^3$ with center a line $\ell \subset \PP^5$ disjoint from $S$
(cf. \cite[Case (III-A)]{Ur}). In this situation, $\pi =
{\Pi_{\ell}}_{|S}\colon S\to\Sigma$ is the minimal
desingularization of $\Sigma$,  and
either:\\
\begin{inparaenum}
\item[(1)] $S$ is the \emph{Veronese surface},
i.e., $S\cong \PP^ 2$ embedded in $\PP^ 5$ via
the linear system $|2L|$, where $L$ is a line on $\proj^2$, and
$\Sigma$ is the \emph{Steiner's Roman surface}, or   \\
\item[(2)] $S \cong  \mathbb{F}_0$, and its embedding in $\PP^5$
is given by $|F_1 + 2 F_2|$, where $F_1$ and $F_2$ are the two distinct rulings, or\\
\item[(3)] $S = \mathbb{F}_2$, and its embedding in $\PP^5$
is given by $|D + F|$, where $D$ is a section with $D^ 2=2$, and
$F$ is the ruling.
\end{inparaenum}

Let  $C \sim 6 H$ on $\Sigma$ and $\Gamma$ on $S$ be as before.
Then in all cases $-\kappa = 36$, hence
\eqref{eq:CZquartic-bis}  does not hold. Therefore we have to
directly check if \eqref {eq:CZ1bis} holds. Since $\Phi (4,6)=73$,
\eqref  {eq:CZ1bis} holds if $\dim(\mathcal F)<23$. To see that
this is the case,  consider the \emph{Rohn exact sequence}
\[
0\to \mathcal O_S(1)^ {\oplus 2}\to N_{S|\PP^ 5}\to N_\pi\to 0
\]
(see \cite [(2.2)]{cil}), where $N_\pi$ is the normal sheaf to the
map $\pi$. Since $h^ 1(S,\mathcal O_S(1))=0$ in the above three
cases, one has
\[
\dim {\mathcal F}\leqslant h^ 0(S,N_\pi)=h^ 0(S,N_{S|\PP^ 5})-12.
\]
On the other hand, $h^ 0(S,N_{S|\PP^ 5})$ is the dimension of the
component of Hilbert scheme described by the surfaces $S$ in the
cases (1)--(3). Notice that the  surfaces in case (3) are
specializations of the ones in case (2). Finally, we obtain
\[
h^ 0(S,N_{S|\PP^ 5})=\dim (\Aut(\proj^5))-\dim (\Aut(S))=
 \left\{\begin{array}{cc}
27 & \mbox{in case} \; (1) \\
 & \\
29 & \mbox{in case} \; (2).\end{array} \right.
\]
(we leave the details to the reader; alternatively,
see the proof of \cite [Lemma (2.3)]{cil}). In conclusion, $\dim(\mathcal F)\leqslant 17<23$, which finishes
our proof.

\providecommand{\bysame}{\leavevmode\hboxto3em{\hrulefill}\thinspace}

\end{document}